\newtheorem{theorem}{Theorem}
\newtheorem{lemma}[theorem]{Lemma}
\newtheorem{proposition}[theorem]{Proposition}
\newtheorem{conjecture}[theorem]{Conjecture}
\newtheorem*{theorem*}{Theorem}
\newtheorem*{conjecture*}{Conjecture}
\newtheorem*{remark*}{Remark}
\newtheorem*{question*}{Question}
\begin{document}
\title{Rational points on generic marked hypersurfaces}
\date{\today}

\author{Qixiao Ma}
%\address{Institute of algebraic geometry, Leibniz University Hannover, Welfengarten 1, 30167 Hannover, Germany}
\address{Insitute of Mathematical Sciences, ShanghaiTech University, 201210, Shanghai, China}
\email{maqx1@shanghaitech.edu.cn}

\begin{abstract}
Over fields of characteristic zero, we show that for $n=1,d\geq4$ or $n=2,d\geq5$ or $n\geq3, d\geq 2n$, the generic $m$-marked degree-$d$ hypersurface in $\mathbb{P}^{n+1}$ admits the $m$ marked points as all the rational points. Over arbitrary fields, we show that for $n=1,d\geq4$ or $n\geq2, d\geq 2n+3$, the identiy map is the only rational self-map of the generic degree-$d$ hypersurface in $\mathbb{P}^{n+1}$.
\end{abstract}

\maketitle

Let $g\geq3,n\geq0$ be integers. It is a well-known result  that the universal curve over $\mathcal{M}_{g,n}$ has no more $k(\mathcal{M}_{g,n})$-rational points other than the obvious ones. 
Over complex numbers, this follows from works in Teichm\"uller theory due to
Hubbard \cite{MR0294719-Hubbard}, Earle and Kra \cite{MR0425183-Earle-Kra}. Over arbitrary fields, algebraic methods were developed by Hain \cite{MR2784328-Hain} and Watanabe \cite{MR4029677-Watanabe}. 

The goal of this note is to extend the result from generic marked curves to generic marked hypersurfaces in projective spaces, see Theorem \ref{theorem-1} and Theorem \ref{theorem-2}. The proof of Theorem \ref{theorem-1} uses non-existence of rational curves on very general high-degree hypersurfaces due to Voisin \cite{MR1420353-Voisin}, \cite{MR1669712-Voisin-Correction}. The proof of Theorem \ref{theorem-2} uses a dimension count of the space of rational curves on very general hypersurfaces, due to Riedl and Woolf \cite{MR3723169-Riedl-Woolf}.

Arithmetically, our result can be viewed as a positive answer to the Franchetta-type question: 
Over the function fields of moduli spaces, 
are constructions on the generic objects necessarily canonical? Topologically, our result can be interpreted as rigidity of moduli spaces under the Guiding Principles proposed by Farb \cite[1.2,1.4]{farb2023rigidity}.

\section{Notations and Main Results}\label{setup-n-marked} 
\subsection{Notations} Let $k$ be a field. Let $n,d$ be positive integers. Let $P_{0}:=|\mathcal{O}_{\mathbb{P}^{n+1}}(d)|$ be the parameter space of degree-$d$ hypersurfaces in $\mathbb{P}^{n+1}_k$, and let $P_{1}\subset P_{0}\times\mathbb{P}^{n+1}$ be the tautological family. We call the projections $p_{1}\colon P_{1}\to P_{0}$ the tautological morphism and $q_{1}\colon P_{1}\to\mathbb{P}^{n+1}$ the canonical morphism to the ambient projective space.

Let $P_m:=P_1\times_{P_0}\cdots\times_{P_0}P_1$ be the fiber product of $m$ copies of $p_1\colon P_1\to P_0$. Let $p_m\colon P_m\to P_{m-1}$ be the projection onto the product of the first $m-1$ factors, and let $q_m=q_1\circ\mathrm{pr}_m\colon P_m\to\mathbb{P}^{n+1}$ be the canonical morphism to $\mathbb{P}^{n+1}$ induced by the projection onto the last factor. The product morphism $(p_m,q_m)\colon P_m\to P_{m-1}\times\mathbb{P}^{n+1}$ realizes $P_m$ as a family of hypersurface over $P_{m-1}$. 

For $i=1,\cdots,m$, let $\mathrm{pr}_i\colon P_m\to P_1$ be the projection onto the $i$-th factor, then the product morphism $(\mathrm{id}_{P_m},\mathrm{pr}_i)\colon P_m\to P_m\times_{P_0} P_1=P_{m+1}$ is a section to $p_{m+1}\colon P_{m+1}\to P_m$. We denote $(\mathrm{id}_{P_m},\mathrm{pr}_i)$ by $d_i$ and call $\{d_i\}_{i=1}^m$ the tautological sections to $p_{m+1}$.

Let us call $P_m$ the parameter space of $m$-marked degree-$d$ hypersurfaces in $\mathbb{P}^{n+1}$, and call $p_{m+1}\colon P_{m+1}\to P_m$ its tautological family. Let $K_m$ be the function field of $P_m$, and let $X_m$ be the generic fiber of $p_{m+1}\colon P_{m+1}\to P_m$. We call the $K_m$-variety $X_m\subset\mathbb{P}^{n+1}_{K_m}$ the generic $m$-marked degree-$d$ hypersurface in $\mathbb{P}^{n+1}$. The tautological sections $\{d_i\}_{i=1}^m$ define $m$ distinct $K_m$-rational points on $X_m$, denoted by $\{\delta_i\}_{i=1}^m$. 

\subsection{Rational Points}The first result of this note is the following:
\begin{theorem}\label{theorem-1} Let $k$ be a field of characteristic zero. Let $m\geq0$ be an integer. Let $X_m/K_m$ be the generic $m$-marked degree-$d$ hypersurface in $\mathbb{P}^{n+1}_k$. If $n=1, d\geq4$ or $n=2,d\geq5$ or 
$n\geq3, d\geq 2n$, then the set of $K_m$-rational points of $X_m$ consists of its $m$ tautological sections:
$$X_m(K_m)=\{\delta_1,\cdots,\delta_m\}.$$
\end{theorem}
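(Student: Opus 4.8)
The plan is to spread out a hypothetical rational point and then rigidify its restriction to a very general fiber using the absence of rational curves. A point of $X_m(K_m)$ is the same as a rational section $s\colon P_m\dashrightarrow P_{m+1}$ of $p_{m+1}$; since $P_{m+1}=P_m\times_{P_0}P_1$, such a section is a $P_0$-rational map, and composing with $q_1$ it is recorded by a rational map $f\colon P_m\dashrightarrow\mathbb{P}^{n+1}$ with $f(t)\in Y_{\pi(t)}$ for general $t$, where $\pi\colon P_m\to P_0$ is the structure map and $Y_{\pi(t)}$ is the corresponding hypersurface. Under this dictionary the tautological point $\delta_i$ is exactly the map $q_i$. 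Since $P_m$ is irreducible and the very general fibers $F_{[Y]}:=\pi^{-1}([Y])\cong Y^m$ are dense, it suffices to prove that for very general $[Y]$ the restriction $\phi:=f|_{F_{[Y]}}\colon Y^m\dashrightarrow Y$ coincides with one of the projections $\mathrm{pr}_j=q_j|_{F_{[Y]}}$, with $j$ independent of $[Y]$; the loci $\{f=q_j\}$ being closed, irreducibility of $P_0$ then forces a single $j$ to be generic, and hence $f=q_j$ globally, i.e. the point equals $\delta_j$.

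First I would promote $\phi$ to a morphism. In all three ranges the very general $Y$ contains no rational curve: for $n\ge3,\ d\ge2n$ this is Voisin's theorem; for $n=2,\ d\ge5$ it follows from Noether--Lefschetz ($\mathrm{Pic}\,Y=\mathbb{Z}\mathcal{O}_Y(1)$) and adjunction, which forces every curve in a class $k\mathcal{O}_Y(1)$ to have positive genus; and for $n=1,\ d\ge4$ the curve $Y$ has genus $\ge3$. Since a rational map from a smooth variety to a variety without rational curves is a morphism, $\phi\colon Y^m\to Y$ is a morphism. For $n\ge2$ one has $\mathrm{Pic}\,Y=\mathbb{Z}\mathcal{O}_Y(1)$ (Grothendieck--Lefschetz for $n\ge3$, Noether--Lefschetz for $n=2$) and $h^1(\mathcal{O}_Y)=0$, so $\mathrm{Pic}(Y^m)=\bigoplus_j \mathrm{pr}_j^*\mathbb{Z}\mathcal{O}_Y(1)$ and $\phi^*\mathcal{O}_Y(1)=\mathcal{O}(a_1,\dots,a_m)$ with $a_j\ge0$. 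Restricting $\phi$ to the $j$-th factor, the others fixed at general points, yields self-morphisms $\phi_j\colon Y\to Y$ with $\phi_j^*\mathcal{O}_Y(1)=\mathcal{O}_Y(a_j)$. (For $n=1$ the Picard computation fails, but the same conclusion is reached by the classical rigidity of maps into a curve of genus $\ge2$: pulling back a global $1$-form and applying the de Franchis--Severi theorem shows $\phi$ factors through a single projection.)

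Two computations finish the fiberwise analysis. Since $d\ge n+3$ throughout, $\omega_Y=\mathcal{O}_Y(d-n-2)$ is ample; for each $j$ with $a_j\ge1$ the map $\phi_j$ pulls an ample bundle back to an ample bundle, hence is finite, and the ramification formula $K_Y=\phi_j^*K_Y+R_j$ with $R_j\ge0$ gives $R_j=(d-n-2)(1-a_j)\,\mathcal{O}_Y(1)$, whose effectivity forces $a_j=1$ and $R_j=0$. Thus $a_j\in\{0,1\}$, and when $a_j=1$ the map $\phi_j$ is étale, hence an isomorphism by simple connectedness of $Y$ (for $n=1$, directly from Riemann--Hurwitz and $g\ge2$), and then $\phi_j=\mathrm{id}$ because the very general $Y$ has trivial automorphism group (Matsumura--Monsky). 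Moreover at most one $a_j$ can be positive: if $a_{j_1},a_{j_2}\ge1$, the restriction of $\phi$ to those two factors pulls $\mathcal{O}_Y(1)$ back to the ample bundle $\mathcal{O}(a_{j_1},a_{j_2})$ and is therefore finite, impossible since its source has dimension $2n>n$. If exactly one $a_j$ equals $1$, the coordinates of $\phi$ are sections of $\mathrm{pr}_j^*\mathcal{O}_Y(1)$, which by Künneth are pulled back from the $j$-th factor; the induced self-map of $Y$ is $\phi_j=\mathrm{id}$, whence $\phi=\mathrm{pr}_j$.

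The only remaining case is $a_1=\dots=a_m=0$, i.e. $\phi$ constant; then $f$ is constant along the fibers of $\pi$ and descends to a rational section of $p_1\colon P_1\to P_0$, that is, to a rational point of the \emph{unmarked} generic hypersurface $X_0(K_0)$. So everything reduces to the base case $X_0(K_0)=\varnothing$, which I expect to be the main obstacle, as the fiberwise trick is unavailable once $\pi$ has zero-dimensional fibers. I would attack it by restricting a hypothetical section to rational curves in $P_0$ (e.g. nondegenerate pencils, whose members have empty common base locus, so the section cannot be a constant base-point section) to obtain a genuine rational curve in $\mathbb{P}^{n+1}$ meeting every member, and then, by letting the pencil vary in a covering family, extracting a rational curve lying on a very general hypersurface, contradicting Voisin's theorem. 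Granting this base case, the constant possibility is excluded, so $\phi=\mathrm{pr}_j$ for a single $j$; by the irreducibility argument of the first paragraph, $f=q_j$ and $X_m(K_m)=\{\delta_1,\dots,\delta_m\}$.
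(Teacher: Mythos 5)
Your fiberwise analysis for $m\geq1$ is broadly sound and even takes a somewhat different route from the paper (which extends the rational map $(X_0)^m\dashrightarrow X_0$ to a morphism and then invokes Beauville's birational-or-degenerate dichotomy plus the rigidity lemma, where you use $\mathrm{Pic}(Y^m)=\bigoplus_j\mathrm{pr}_j^*\mathbb{Z}h$, the ramification formula forcing each $a_j\in\{0,1\}$, and K\"unneth). But there is a genuine gap exactly where you flagged it: the base case $X_0(K_0)=\emptyset$, to which your whole induction reduces, is not proved, and the pencil strategy you sketch cannot work as stated. First, a pencil of degree-$d$ hypersurfaces in $\mathbb{P}^{n+1}$ with $n\geq1$ never has empty base locus: any two hypersurfaces meet in dimension at least $n-1\geq0$, so the ``constant base-point section'' cannot be ruled out this way. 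More fatally, the output of your construction --- a rational curve in $\mathbb{P}^{n+1}$ \emph{meeting} every member of the pencil --- carries no information at all: by the projective dimension theorem, \emph{every} curve in $\mathbb{P}^{n+1}$ meets \emph{every} hypersurface, so no amount of varying the pencil can upgrade ``meeting'' to ``lying on'' and no contradiction with Voisin's theorem is in sight. The paper's Proposition \ref{m=0} handles this case by a completely different and elementary mechanism that needs no rational-curve input: $\mathrm{CH}^*(P_1)$ is generated by the two hyperplane classes $h_1,h_2$, so the closure of any zero-cycle $z$ on $X_0$ has class $\sum_{a+b=n+1}c_{a,b}h_1^ah_2^b$, and intersecting with $h_1^{\dim P_0}$ gives $\deg(z)=d\cdot c_{0,n+1}$; hence there is no zero-cycle of degree $1$, in particular no rational point (this is also why the base case holds for all $n\geq0$, $d\geq2$, over any field). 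Without this step, your constant case --- and with it the entire theorem --- is unproven.

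Two further errors, smaller but genuine, should be repaired. For $n=2$, $d\geq5$ you justify the absence of rational curves on a very general surface by Noether--Lefschetz plus adjunction; this is not a proof, because adjunction bounds only the \emph{arithmetic} genus, and singular rational curves of positive arithmetic genus are not excluded --- indeed a very general quartic K3 surface has $\mathrm{Pic}=\mathbb{Z}$ yet contains nodal rational curves in multiples of the hyperplane class. What is needed here is a hard theorem (Xu's theorem on curves in general surfaces of degree $\geq5$, or Lemma \ref{Riedl-Woolf-lemma} as the paper uses). Second, your reduction rests on very general \emph{closed} fibers being dense in $P_m$; over a countable field such as $k=\mathbb{Q}$ the complement of countably many proper closed loci may contain no closed point at all, so the argument as written does not apply to all fields of characteristic zero. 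The fix is standard --- base change to an uncountable algebraically closed extension and descend the conclusion $f=q_j$, or work with the geometric generic fiber as the paper does --- but it must be said.
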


\subsection{Rational Self-maps} For $m=1$, Theorem \ref{theorem-1} is equivalent to: Every rational self map of $X_0/K_0$ is the identity map. We are able to extend this result to arbitrary characteristics.
\begin{theorem}\label{theorem-2}Let $k$ be a field. Let $X_0/K_0$ be the generic degree $d$-hypersurface in $\mathbb{P}^{n+1}_k$. If $n\geq1, d\geq 2n+3$ or $(n,d)=(1,4)$, then the identity map is the only $K_0$-rational self-map of $X_0$:
$$\mathrm{Rat}_{K_0}(X_0,X_0)=\{\mathrm{id}_{X_0}\}.$$
\end{theorem}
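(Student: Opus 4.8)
The plan is to recast the statement as a rigidity property of a sufficiently general hypersurface and then feed it into the dimension count of Riedl--Woolf \cite{MR3723169-Riedl-Woolf}. Since $K_1=K_0(X_0)$, a $K_0$-rational self-map of $X_0$ is exactly a $K_1$-point of the base change $(X_0)_{K_1}=X_1$, the identity corresponding to the tautological point $\delta_1$; thus the assertion is the $m=1$ instance of the rigidity in Theorem \ref{theorem-1}, now over an arbitrary field. Spreading a hypothetical self-map $\phi\neq\mathrm{id}$ over a model of $P_0$ and restricting to fibres, I would reduce to the geometric statement that a sufficiently general hypersurface $X\subset\mathbb{P}^{n+1}$ of degree $d\ge 2n+3$ (or $(n,d)=(1,4)$) carries no rational self-map other than the identity. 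A preliminary technical point is that Riedl--Woolf is phrased for very general members while $k$ may be countable; I would circumvent this by passing to the geometric generic point of $P_0$, where every conclusion valid for very general hypersurfaces persists.

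Given such a $\phi\colon X\dashrightarrow X$ with $\phi\neq\mathrm{id}$, I would argue according to the dimension $e=\dim\overline{\phi(X)}$ of its image. When $\phi$ is dominant, the hypothesis $d\ge n+3$ makes $X$ a variety of general type with ample canonical sheaf; a dominant rational self-map of a variety of general type is birational, and a birational self-map of a variety with ample canonical sheaf is an automorphism, so $\phi\in\mathrm{Aut}(X)$. As the very general hypersurface of degree $d\ge 3$ in our range has trivial automorphism group by the theorem of Matsumura and Monsky, this forces $\phi=\mathrm{id}$, contrary to assumption.

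The substance of the proof is therefore the non-dominant case $0\le e\le n-1$. Here the fibres $F=\overline{\phi^{-1}(z)}$ are positive-dimensional and sweep out $X$, and for general $x$ one has $x\notin\overline{\phi(X)}\ni\phi(x)$, so $x\neq\phi(x)$. The strategy is to convert this covering family of positive-dimensional fibres, together with the projective embedding, into a nontrivial algebraic family of rational curves on $X$: after resolving the indeterminacy of $\phi$ and examining the locus contracted to the lower-dimensional image, one expects rational curves covering a subvariety of $X$ of dimension at least $n-e\ge 1$. This is then to be contradicted by the Riedl--Woolf estimate, which for $d\ge 2n+3$ confines the rational curves on $X$ to a locus of strictly smaller dimension; the loss relative to the sharper bound $d\ge 2n$ available in characteristic zero through Voisin \cite{MR1420353-Voisin} is precisely the price of invoking a dimension count in place of outright non-existence. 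I expect the main obstacle to be this extraction step---producing enough rational curves from a merely non-dominant self-map and matching the two dimension estimates uniformly in $e$---rather than the dominant case or the reduction to the very general fibre.
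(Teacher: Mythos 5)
Your outline has three genuine gaps, and the first one already makes your proposed reduction unsound. You reduce to the purely geometric statement that a sufficiently general (or geometric generic) hypersurface ``carries no rational self-map other than the identity.'' That statement is false for \emph{every} positive-dimensional variety: constant maps $X\dashrightarrow \{p\}\subset X$ are rational self-maps, and over $\overline{K_0}$ they exist in abundance. What kills the case $e=0$ is not geometry but arithmetic: a $K_0$-rational map with zero-dimensional image is a $K_0$-point of $X_0$, and $X_0(K_0)=\emptyset$ because every zero-cycle on $X_0$ has degree divisible by $d$ (Proposition \ref{m=0}). Passing to the geometric generic point discards exactly this input, so $e=0$ cannot be closed inside your framework. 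Second, for $0<e<n$ your extraction step has no mechanism behind it: fibres of a non-dominant map, or the loci contracted after resolving indeterminacy, need not contain any rational curve (think of a pencil of hyperplane sections, whose fibres are general-type divisors). Rational curves enter this problem elsewhere: exceptional divisors of a resolution of indeterminacy are ruled, so when $X$ contains no rational curves every rational map \emph{to} $X$ is a morphism (Lemma \ref{Gabber-Liu}); this removes indeterminacy but says nothing about non-dominant morphisms. The paper's Theorem \ref{theorem-1} handles $0<e<n$ by $\mathrm{Pic}=\mathbb{Z}$ plus the projection formula, i.e.\ Noether--Lefschetz technology, which is why that argument is confined to characteristic zero. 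Third, in your dominant case the step ``a dominant rational self-map of a variety of general type is birational'' is a characteristic-zero fact: in characteristic $p$, purely inseparable dominant self-maps (e.g.\ Frobenius on any variety defined over a finite field) defeat the pluricanonical pullback argument, so you would need a separate argument excluding inseparable self-maps of the generic hypersurface; Matsumura--Monsky is likewise a characteristic-zero theorem (the char $p$ extension exists and is cited in Lemma \ref{auto}, so this point is patchable). Since positive characteristic is precisely the content of Theorem \ref{theorem-2} beyond Theorem \ref{theorem-1}, your proof, even if completed, would collapse onto the already known case.

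The paper's proof is structured to avoid all of these inputs and never splits on $\dim\overline{\phi(X)}$. The Riedl--Woolf estimate (Lemma \ref{lemma-Erid-Riedl-dim}) is used for a different purpose than the one you assign it: the closure $Z$ of the graph of $\phi$ inside $P_1\times_{P_0}P_1$ maps birationally to $P_1$; by Abhyankar's lemma any positive-dimensional fibre of $Z\to P_1$ is covered by rational curves lying in fibres of $\mathrm{pr}_1$, and since that rational-curve locus has codimension at least $2$ while an exceptional locus over the smooth, $\mathbb{Q}$-factorial $P_1$ would have codimension $1$, the map $Z\to P_1$ is an isomorphism. Hence $\phi$ extends to a regular section $s$ of $\mathrm{pr}_1$ over \emph{all} of $P_1$, not just over the generic fibre (this is also why codimension $2$, i.e.\ $d\geq 2n+3$, is needed rather than mere non-existence of rational curves on very general members). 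The conclusion is then pure projective geometry: restricting $s$ to the locus of pairs $(H,v)$ with $v\in\mathbb{P}^{n+1}(k)$ fixed gives a morphism from the linear system $P_v$, a projective space of dimension greater than $n+1$, to $\mathbb{P}^{n+1}$; such a morphism is constant, and its constant value lies on every hypersurface through $v$, hence equals $v$. So $s$ is the diagonal section and $\phi=\mathrm{id}$. This route is characteristic-free and requires no Picard-group computation, no general-type argument, and no automorphism theorem.
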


\section{Proof of Theorem \ref{theorem-1}}
\subsection{}We begin with the case $m=0$. In this case, we can relax the restrictions on the dimension, the degree and the characteristic of the base field.
\begin{proposition}\label{m=0}
Let $k$ be any field. Let $X_0/K_0$ be the generic degree-$d$ hypersurface in $\mathbb{P}_k^{n+1}$. If $n\geq0$ and $d\geq2$, then the degree of every zero-cycle on $X_0$ is divisible by $d$, and therefore $X_0$ does not admit $K_0$-rational points:
$$X_0(K_0)=\emptyset.$$
\end{proposition}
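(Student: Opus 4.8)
The plan is to prove the sharper statement that the index of $X_0/K_0$ equals $d$, from which $X_0(K_0)=\emptyset$ follows at once, since a $K_0$-rational point is a closed point of degree $1$ while $d\geq 2$. One divisibility is immediate: intersecting $X_0$ with a general line in $\mathbb{P}^{n+1}_{K_0}$ yields a zero-cycle of degree $d$ by B\'ezout, so the index divides $d$. The entire content lies in the reverse divisibility, namely that every closed point of $X_0$ has degree divisible by $d$; by additivity this then forces $d$ to divide the degree of every zero-cycle.

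First I would set up intersection theory on the ambient product. Write $N=\dim P_0$ and $A^*(P_0\times\mathbb{P}^{n+1})=\mathbb{Z}[H,h]/(H^{N+1},h^{n+2})$, where $H$ and $h$ are the hyperplane classes pulled back from $P_0$ and $\mathbb{P}^{n+1}$ respectively, and let $\iota\colon P_1\hookrightarrow P_0\times\mathbb{P}^{n+1}$ be the inclusion. Since $P_1$ is cut out by the universal equation, a section of $\mathcal{O}(1,d)$, we have $[P_1]=H+dh$. The key geometric input is that the canonical morphism $q_1\colon P_1\to\mathbb{P}^{n+1}$ is a projective bundle: the fiber over a point $x$ is the hyperplane in $P_0$ of degree-$d$ forms vanishing at $x$, so $P_1=\mathbb{P}(E)$ with $E=\ker\bigl(\mathrm{ev}\colon H^0(\mathcal{O}(d))\otimes\mathcal{O}\to\mathcal{O}(d)\bigr)$ a rank-$N$ bundle on $\mathbb{P}^{n+1}$. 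By the projective bundle formula, $A^n(P_1)$ is the \emph{free} $\mathbb{Z}$-module on $\{\zeta^{n-b}h^b\}_{b=0}^n$, where $\zeta=c_1(\mathcal{O}_{\mathbb{P}(E)}(1))$ is precisely the restriction $\iota^*H$ and $h$ is the pullback of the hyperplane from $\mathbb{P}^{n+1}$.

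Next, given a closed point $z\in X_0$ of degree $e$, I would spread it out: the closure of $z$ inside $P_1$ is an integral cycle $\bar Z\subset P_1$ of dimension $N$, hence codimension $n$, which is generically finite of degree $e$ onto $P_0$. I would then extract $e$ as an intersection number in two ways. On one hand, pushing $\bar Z$ forward to the ambient product and intersecting with $H^N$, the class of a fiber of the first projection, recovers the degree over $P_0$, so that $e$ equals the coefficient of $h^{n+1}$ in $\iota_*[\bar Z]$. On the other hand, writing $[\bar Z]=\sum_{b=0}^n\beta_b\,\zeta^{n-b}h^b$ in $A^*(P_1)$ with each $\beta_b\in\mathbb{Z}$ and applying the projection formula $\iota_*\iota^*\alpha=\alpha\cdot(H+dh)$ term by term, one checks that the unique contribution to $h^{n+1}$ arises from the $b=n$ term and equals $d\beta_n\,h^{n+1}$. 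Comparing the two computations gives $e=d\beta_n$, whence $d\mid e$.

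Combining the two divisibilities yields index $=d$, and in particular $X_0$ has no closed point of degree $1$, so $X_0(K_0)=\emptyset$. The main obstacle is the bookkeeping that converts the bare fact ``$\bar Z$ lies on the divisor $P_1=\mathbb{P}(E)$'' into the arithmetic constraint $d\mid e$; the two pillars are that the projective bundle structure forces the coefficients $\beta_b$ to be genuine integers, and that $\zeta=\iota^*H$, so the factor $d$ in $[P_1]=H+dh$ propagates exactly into the $h^{n+1}$-coefficient. A secondary technical care is that everything must be carried out with Fulton-style intersection theory, which is valid over the arbitrary field $k$, and that the spread-out $\bar Z$ is honestly an $e$-to-$1$ cover over a dense open of $P_0$.
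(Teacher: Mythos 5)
Your proof is correct and follows essentially the same route as the paper: both exploit the projective-bundle structure of $q_1\colon P_1\to\mathbb{P}^{n+1}$ to see that $\mathrm{CH}^*(P_1)$ is generated by the two hyperplane classes, expand the class of the closure of a zero-cycle in that basis, and extract its degree by intersecting with $h_1^{\dim P_0}$, where B\'ezout (equivalently, the class $[P_1]=H+dh$ in your formulation) produces the factor $d$. Your transcription of the computation on the ambient product via $\iota_*$ is the same calculation the paper performs on $P_1$ itself, and in fact your bookkeeping records the codimension of the spread-out cycle in $P_1$ correctly as $n$ (the paper writes $n+1$, an off-by-one slip that your version silently repairs).
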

\begin{proof}
Recall that with our notation in Section \ref{setup-n-marked}, the generic hypersurface $X_0$ is the generic fiber of the projection $$p_1\colon P_1\subset P_0\times\mathbb{P}^{n+1}\to P_0.$$ The projection onto the second factor $q_1\colon P_1\subset P_0\times\mathbb{P}^{n+1}\to\mathbb{P}^{n+1}$ realizes $P_1$ as a projective space bundle over $\mathbb{P}^{n+1}$. By \cite[Theorem 3.3]{MR1644323}, the Chow ring $\mathrm{CH}^*(P_1)$ is generated by $h_1:=p_1^*\mathcal{O}_{P_0}(1)$ and $h_2:=q_1^*\mathcal{O}_{\mathbb{P}^{n+1}}(1)$. For any zero-cycle $z$ on $X_0$, its closure $\overline{z}$ is a codimension-$(n+1)$ cycle on $P_1$. Therefore in $\mathrm{CH}^{n+1}(P_1)$, we have $$[\overline{z}]=\sum_{a+b=n+1} c_{a,b}\cdot h_1^a\cdot h_2^b,\textrm{  with } c_{a,b}\in\mathbb{Z}.$$ The degree of $z$ can be expressed as intersection number $$\deg(z)= [\overline{z}]\cdot h_1^{\dim P_0}=c_{0,n+1}\cdot h_1^{\dim P_0}\cdot h_2^{n+1}=d\cdot c_{0,n+1},$$ where the last equality follows from Bezout's theorem. This finishes the proof.
\end{proof}

\subsection{}In order to deal with the case $m>0$, we collect some lemmas. 
\begin{lemma}\label{Riedl-Woolf-lemma}
For $d\geq 2n+1$, a very general degree-$d$ hypersurface in $\mathbb{P}^{n+1}$ will contain no rational curves, and moreover, the locus of hypersurfaces that contain rational curves will have codimension at least $d-2n$.
\end{lemma}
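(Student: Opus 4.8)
The plan is to prove Lemma \ref{Riedl-Woolf-lemma} by a dimension-count comparison between the parameter space of pairs (hypersurface, rational curve on it) and the parameter space $P_0$ of all degree-$d$ hypersurfaces. This is exactly the strategy attributed to Riedl and Woolf \cite{MR3723169-Riedl-Woolf} in the introduction, so I would structure the argument around their main estimate. The key object is the incidence variety
$$\mathcal{I}_e=\{(X,C)\colon C\subset X\text{ a rational curve of degree }e\},$$
together with its two projections: $\pi\colon\mathcal{I}_e\to P_0$ and the forgetful map sending $(X,C)$ to the curve $C$. Since a rational curve of arbitrary degree always contains (or degenerates from) lines and lower-degree rational curves, the essential case is to control the fibers of $\pi$ and show that the image of $\pi$ is small.

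First I would fix the degree $e$ of the rational curve and parametrize degree-$e$ maps $\mathbb{P}^1\to\mathbb{P}^{n+1}$, whose space has dimension $(n+2)(e+1)-1$ modulo the reparametrization group $\mathrm{PGL}_2$ of dimension $3$. For a fixed such curve $C$, the hypersurfaces of degree $d$ containing $C$ form a linear subsystem of $P_0$ whose codimension equals the number of independent conditions imposed by passing through $C$, namely $de+1$ generic conditions (the space of degree-$d$ forms vanishing on $C$ has codimension $\min(de+1,\dim P_0+1)$ in the space of all forms). The core computation is then to compare
$$\dim(\text{curves of degree }e)+\dim(\text{hypersurfaces through a fixed such }C)$$
against $\dim P_0$; the difference, after the $\mathrm{PGL}_2$ quotient, gives the codimension of $\pi(\mathcal{I}_e)$ in $P_0$. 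Carrying this out and minimizing over $e\geq 1$ yields that the total locus $\bigcup_e\pi(\mathcal{I}_e)$ has codimension at least $d-2n$, with the minimum realized by lines ($e=1$).

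The main obstacle, and the reason I would lean on \cite{MR3723169-Riedl-Woolf} rather than redo the estimate from scratch, is that the naive expected-dimension count only governs the \emph{generic} behavior: one must rule out the possibility that some positive-dimensional family of hypersurfaces contains an \emph{unexpectedly large} family of rational curves, i.e.\ that the fiber dimension of $\pi$ jumps. Controlling this requires understanding the normal bundle of $C$ in $\mathbb{P}^{n+1}$ and showing that for a general hypersurface through $C$ the space of deformations of $C$ inside $X$ is no larger than expected. This is precisely where Riedl and Woolf's analysis of the spaces of rational curves is needed, and it is what upgrades the heuristic inequality into a genuine lower bound on the codimension. Once this bound $d-2n$ is established, the non-existence statement for $d\geq 2n+1$ follows immediately: the codimension is then at least $1$, so a very general hypersurface lies outside $\pi(\mathcal{I}_e)$ for every $e$ and hence contains no rational curve at all.
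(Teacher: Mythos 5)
The paper gives no argument for this lemma: its ``proof'' is the citation \cite[Theorem 1.2]{MR3723169-Riedl-Woolf}, of which the lemma is a restatement (together with the remark that the result holds in arbitrary characteristic). Since your proposal also delegates the decisive step to that same reference, you end up in the same logical position as the paper, and to that extent the two agree.

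However, the sketch you wrap around the citation misdescribes both the count and the cited proof, and this is worth flagging. First, the strategy you outline --- a naive incidence count over the space of degree-$e$ maps, upgraded by a normal-bundle analysis showing that deformations of $C$ inside a general $X$ through $C$ are no larger than expected --- is the Clemens--Ein--Voisin method, which is what underlies Lemma \ref{Voisin-lemma} (characteristic zero, no codimension statement). Riedl--Woolf's argument is different in kind: it is an induction trading ambient dimension for codimension of the rational-curve locus, i.e.\ exactly the mechanism of \cite[Theorems 3.9 and 4.1]{MR3723169-Riedl-Woolf} that this paper invokes verbatim in the proof of Lemma \ref{lemma-Erid-Riedl-dim}. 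This also explains the constant in the statement: the naive count for lines gives codimension $(d+1)-2n=d-2n+1$, and for $d\geq n+2$ no degree $e\geq2$ gives less, so your claim that minimizing over $e$ ``yields $d-2n$, realized by lines'' is not what the count produces; the slightly weaker bound $d-2n$ reflects losses in Riedl--Woolf's induction, not the line count. Second, you misplace the obstacle. Jumping fibers of $\pi\colon\mathcal{I}_e\to P_0$ are harmless for the statement at hand: the image of a component $W$ of $\mathcal{I}_e$ has dimension $\dim W$ minus the generic fiber dimension of $\pi|_W$, so larger fibers only shrink the image. What actually breaks the naive count is the other projection: special rational curves (for instance rational curves of large degree lying in a low-dimensional linear subspace, which exist in every degree) impose fewer than $de+1$ independent conditions on $|\mathcal{O}_{\mathbb{P}^{n+1}}(d)|$, so your parenthetical formula $\min(de+1,\dim P_0+1)$ for the number of conditions is false for such curves, and the resulting excess-dimensional components of $\mathcal{I}_e$ are precisely what must be controlled; bounding the images of those components is the entire content of the theorem. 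None of this makes the proposal ``incorrect,'' since everything hard is deferred to the citation, exactly as in the paper --- but as an account of why the lemma is true, the surrounding heuristic would mislead a reader.
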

\begin{proof}
See \cite[Theorem 1.2]{MR3723169-Riedl-Woolf}. The result holds over fields of arbitrary characteristics.
\end{proof}

\begin{lemma}\label{Voisin-lemma}
Over fields of characteristic zero, for $n\geq3$ and $d\geq 2n$, a very general degree-$d$ hypersurface in $\mathbb{P}^{n+1}$ will contain no rational curves.
\end{lemma}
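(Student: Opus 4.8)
The plan is to deduce this from Voisin's theorem \cite{MR1420353-Voisin}, \cite{MR1669712-Voisin-Correction}, and I will outline the geometric mechanism that produces the sharp bound $d\geq 2n$. Note first that this threshold is strictly better than the characteristic-free bound $d\geq 2n+1$ of Lemma \ref{Riedl-Woolf-lemma}; the gain of a single unit in the degree is exactly what forces the argument to be Hodge-theoretic, and hence to use characteristic zero. First I would set up the universal family $\mathcal{X}\to S$ over the parameter space $S=|\mathcal{O}_{\mathbb{P}^{n+1}}(d)|$ and reduce the statement to a stronger positivity property of canonical bundles of subvarieties: for a very general $X$ and any subvariety $Y\subset X$ with desingularization $j\colon\tilde Y\to X$, the canonical sheaf $K_{\tilde Y}$ dominates a suitable positive twist of $j^*\mathcal{O}_X(1)$. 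For curves the relevant form of this statement is $H^0(\tilde Y, K_{\tilde Y}\otimes j^*\mathcal{O}_X(2n-d))\neq 0$. Applying this with $Y$ a rational curve of degree $e\geq 1$, so that $\tilde Y=\mathbb{P}^1$, $K_{\tilde Y}=\mathcal{O}(-2)$, and $j^*\mathcal{O}_X(1)=\mathcal{O}_{\mathbb{P}^1}(e)$, the twisted sheaf becomes $\mathcal{O}_{\mathbb{P}^1}(-2-(d-2n)e)$, which has negative degree and hence no sections whenever $d\geq 2n$. This contradiction rules out rational curves and is precisely where the threshold $d\geq 2n$ appears.

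The heart of the matter is establishing the canonical positivity statement, and this is where I expect the main obstacle. Following Voisin, the strategy is to analyze the infinitesimal variation of Hodge structure of the universal family: one encodes the first-order deformations of $X$ inside $S$ via the Kodaira--Spencer map and shows that a subvariety which survives to the very general member must already carry enough holomorphic forms to make its twisted canonical bundle effective. Concretely, I would translate global sections of $K_{\tilde Y}$ and its twists into the cohomology of the Jacobian (Macaulay) ring of the defining polynomial, and then exploit Macaulay duality together with the surjectivity of the relevant multiplication maps in the Jacobian ring in the stated degree range. The positivity of $\mathcal{O}_X(d-n-2)=K_X$ for large $d$ is what feeds these surjectivity statements and ultimately produces the desired sections.

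Two delicate points form the crux. First, \emph{very general} is unavoidable: the positivity can fail on a countable union of proper closed subsets of $S$, one for each numerical type of subvariety and each component of the associated Hilbert scheme, so the argument yields a statement only away from this countable union, which is exactly the hypothesis recorded in the lemma. Second, the boundary case of the numerical estimate is genuinely subtle; the original treatment in \cite{MR1420353-Voisin} contained a gap in a limiting case that was repaired in \cite{MR1669712-Voisin-Correction}, and a careful proof must track the exact degree inequality there rather than relying on the generic surjectivity alone. Since both inputs are fully established in \cite{MR1420353-Voisin} and \cite{MR1669712-Voisin-Correction} in the range $n\geq 3$, $d\geq 2n$, the cleanest route for this note is to cite those results directly.
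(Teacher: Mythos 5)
Your proposal is correct and, at bottom, takes exactly the same route as the paper: the paper's entire proof of this lemma is the citation of \cite[Theorem 0.4]{MR1420353-Voisin} and \cite[Theorem 1]{MR1669712-Voisin-Correction} (with a pointer to \cite[Section 4]{MR3723169-Riedl-Woolf} for why characteristic zero is needed), which is also your stated conclusion. Your expository sketch of the mechanism --- effectivity of $K_{\tilde Y}\otimes j^*\mathcal{O}_X(2n-d)$ specialized to a rational curve of degree $e$, giving $\mathcal{O}_{\mathbb{P}^1}(-2-(d-2n)e)$ with no sections --- is a faithful account of how Voisin's theorem yields the statement, but it is supplementary to, not different from, the paper's proof-by-citation.
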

\begin{proof}
See \cite[Theorem 0.4]{MR1420353-Voisin} and \cite[Theorem 1]{MR1669712-Voisin-Correction}. See \cite[Section 4]{MR3723169-Riedl-Woolf} for some issues on extending this result to positive characteristics.
\end{proof}

\begin{lemma}\label{Gabber-Liu}Let $X$ be an algebraic variety that geometrically contains no rational curves. Let $Y$ be a regular algebraic variety, then every rational map $f\colon Y\dashrightarrow X$ extends to a morphism $f\colon Y\to X$.
\end{lemma}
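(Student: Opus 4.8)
The plan is to reduce the statement, by a sequence of standard dévissages, to the case of a smooth surface, where one can eliminate indeterminacy by blowing up points. Since ``geometrically contains no rational curves'' is a condition on $X_{\bar k}$ and, in the applications, $Y$ is smooth over $k$, I would first base change to the algebraic closure: the domain of definition of $f$ is an open subscheme whose formation commutes with the flat extension $\bar k/k$, so it suffices to treat the case $k=\bar k$. I would also use that $X$ is proper (the hypersurfaces to which the lemma is applied are projective), since without properness the conclusion fails. The first genuine step is then the observation that a rational map from a regular, hence normal, variety to a proper variety is defined in codimension one: at each codimension-one point $\eta\in Y$ the local ring $\mathcal{O}_{Y,\eta}$ is a discrete valuation ring, and the valuative criterion of properness extends $f$ over $\mathrm{Spec}\,\mathcal{O}_{Y,\eta}$. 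Consequently the indeterminacy locus $Z:=Y\smallsetminus\mathrm{dom}(f)$ is closed of codimension at least two; in particular the lemma is immediate when $\dim Y\leq 1$.

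Assume for contradiction $Z\neq\emptyset$ and pick a closed point $y\in Z$. The heart of the argument is to cut $Y$ down to a surface while preserving the indeterminacy at $y$. Letting $\Gamma\subset Y\times X$ denote the closure of the graph of $f$ and $\pi\colon\Gamma\to Y$ the projection, Zariski's main theorem shows that $f$ is defined at $y$ precisely when $\pi^{-1}(y)$ is a single point; since $y\in Z$, the fiber $\pi^{-1}(y)$ has positive dimension. I would choose a general two-dimensional linear section $S\subset Y$ through $y$: as $y$ is a smooth point of $Y$, a general hyperplane through $y$ stays transverse, so $S$ is regular in a neighborhood of $y$, and a general such $S$ meets the positive-dimensional fiber $\pi^{-1}(y)$ so that the restricted map $f|_S$ remains undefined at $y$.

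On the smooth surface $S$ I would eliminate the indeterminacy of $f|_S$ by a finite sequence of blow-ups of points, a procedure valid in every characteristic: this yields a proper birational morphism $\sigma\colon\widetilde{S}\to S$, an isomorphism away from $y$, together with a morphism $\widetilde{f}\colon\widetilde{S}\to X$. The fiber $\sigma^{-1}(y)$ is a connected union of curves each isomorphic to $\mathbb{P}^1$. Were $\widetilde{f}$ to contract all of $\sigma^{-1}(y)$ to a point, the rigidity lemma (equivalently, Zariski's main theorem applied to $\sigma$, which has connected fibers) would force $\widetilde{f}$ to factor through $\sigma$ near $y$, so that $f|_S$, and hence $f$, would extend across $y$, contradicting $y\in Z$. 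Therefore $\widetilde{f}$ restricts to a non-constant morphism $\mathbb{P}^1\to X$ on some component of $\sigma^{-1}(y)$, producing a rational curve on $X_{\bar k}$ and contradicting the hypothesis. Hence $Z=\emptyset$ and $f$ is a morphism.

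The step I expect to be the main obstacle is the reduction to a surface with the indeterminacy at $y$ left intact: one must ensure both that the general linear section is regular at $y$ and that restricting $f$ to it does not accidentally resolve the indeterminacy, and this has to be arranged without invoking resolution of singularities, so as to remain valid in positive characteristic. The surface case itself is comparatively robust, since blowing up points of a smooth surface and the resulting trees of rational curves behave uniformly in all characteristics.
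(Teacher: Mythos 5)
Your overall strategy differs from the paper's, whose entire proof is the citation \cite[Proposition 6.2]{MR3347315-Gabber-Liu}: you give the classical characteristic-free template (graph closure, indeterminacy in codimension $\geq 2$ by the valuative criterion, cut to a surface, resolve by point blow-ups, read off a rational curve from a non-contracted exceptional $\mathbb{P}^1$). Your remark that properness of $X$ must be added is also correct and worth noting: the statement as printed is false without it (the rational map $A\dashrightarrow A\setminus\{0\}$ induced by the identity of an abelian variety $A$ does not extend, yet $A\setminus\{0\}$ contains no rational curves). However, the step you yourself flag as ``the main obstacle'' is a genuine gap, not a technicality. You never prove that a general two-dimensional linear section $S$ through $y$ keeps $f|_S$ undefined at $y$; you only say that $S$ ``meets the positive-dimensional fiber''. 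Meeting the fiber is automatic (the graph closure of $f|_S$ surjects onto $S$) and insufficient: by Zariski's main theorem, what must be shown is that the strict transform of $S$ inside the graph closure $\Gamma$ meets $E=\pi^{-1}(y)$ in \emph{positive} dimension, for if it meets $E$ in finitely many points then $f|_S$ \emph{is} defined at $y$ and your contradiction evaporates. One can verify the claim when no component of $E$ is a divisor of $\Gamma$ (then the strict transform of a general hyperplane section through $y$ contains every positive-dimensional component of $E$), but when $E$ has divisorial components contracted to $y$, the needed intersection estimate takes place on the possibly very singular variety $\Gamma$, where divisors can intersect in smaller than expected dimension; I do not see how to close that case without invoking precisely the kind of Abhyankar--Zariski input on birational morphisms that the surface-cutting was designed to avoid.

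The clean repair is to drop the surfaces altogether and argue on the graph, using the result the paper itself invokes elsewhere as Abhyankar's lemma, \cite[Proposition 1.3]{MR1658959-Kollar-Mori}, which is characteristic-free: after base change to $\bar k$, let $\Gamma'\to\Gamma$ be the normalization and $\pi'\colon \Gamma'\to Y$ the induced proper birational morphism onto the regular $Y$. If $f$ were undefined at some $y$, then $\pi'^{-1}(y)$ would be positive-dimensional, hence would contain a rational curve $R$ contracted by $\pi'$; since $\Gamma'\to Y\times X$ is finite and $R$ is contracted by the first projection, $R$ maps finitely onto a rational curve inside $\{y\}\times X\cong X_{\bar k}$, contradicting the hypothesis. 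If you insist on keeping the surface argument, you cannot take $S$ general: instead pick $e_1=(y,x_1)$ and $e_2=(y,x_2)$ in $E$ with $x_1\neq x_2$ (possible since $E\subseteq\{y\}\times X$ is positive-dimensional), choose curves $B_i\subseteq\Gamma$ through $e_i$ meeting $\pi^{-1}(\mathrm{dom}\,f)$, and force $S$ to contain both image curves $\pi(B_i)$; then $f|_S$ cannot be a morphism at $y$, since by separatedness its value there would have to equal both $x_1$ and $x_2$. But such an $S$ can no longer be assumed regular (or even unibranch) at $y$, so the subsequent blow-up analysis needs extra care; this is doable but no longer elementary, which is why the first repair, or the paper's citation, is preferable. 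In short: the architecture is sound and genuinely different from the paper's one-line proof, but the pivotal Bertini-type claim is unproven, exactly where you predicted the difficulty would lie.
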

\begin{proof}
Apply \cite[Proposition 6.2]{MR3347315-Gabber-Liu} to the projection $X\times Y\to Y$.
\end{proof}

\begin{lemma}\label{Beauville}
Let $k$ be a field of characteristic zero. Let $X\subset\mathbb{P}^{n+1}$ be a smooth hypersurface of degree $d$. Let $f\colon X\to X$ be a self-morphism. If $n\geq1$ and $d\geq2$, then either $f$ is birational or $\dim f(X)< n$.
\end{lemma}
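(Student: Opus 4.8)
The plan is to run a Riemann--Hurwitz argument driven by the adjunction formula. If $\dim f(X)<n$ there is nothing to prove, so I assume $f$ is dominant; since $X$ is projective and irreducible of dimension $n$, the image $f(X)$ is then all of $X$ and $f$ is a generically finite surjection of some degree $\delta\geq1$. The goal is to show $\delta=1$, which forces $f$ to be birational. First I would record the two numerical inputs. Adjunction gives $\omega_X\cong\mathcal{O}_X(d-n-2)$, so the sign of $d-(n+2)$ decides whether $X$ is of general type, Calabi--Yau, or Fano. Since $\mathrm{char}\,k=0$, the map $f$ is generically \'etale, and the sheaf homomorphism $f^*\Omega_X\to\Omega_X$ is generically an isomorphism, whose determinant exhibits an effective ramification divisor $R\geq0$ with $\omega_X\cong f^*\omega_X\otimes\mathcal{O}_X(R)$.

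The crux is to compare $f^*\mathcal{O}_X(1)$ with $\mathcal{O}_X(1)$. For $n\geq3$ the Grothendieck--Lefschetz theorem gives $\mathrm{Pic}(X)=\mathbb{Z}\cdot\mathcal{O}_X(1)$, so $f^*\mathcal{O}_X(1)\cong\mathcal{O}_X(r)$ for a unique integer $r\geq1$; for $n\leq2$ I would run the same computation after passing to numerical equivalence, writing $f^*H\equiv rH$ with $r>0$ when the relevant rank is one, and otherwise comparing $f^*H$ against the hyperplane class $H$ by nefness and positivity directly. Using the projection formula $\int_X(f^*H)^n=\delta\int_X H^n$ together with $\int_X H^n=d$, I obtain $r^n=\delta$. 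Substituting $f^*\omega_X\cong\mathcal{O}_X\big(r(d-n-2)\big)$ into the ramification relation then yields the class identity $[R]=(d-n-2)(1-r)\,H$.

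The conclusion reads off from the effectivity of $R$. When $d\geq n+3$ the hypersurface is of general type, $d-n-2>0$, and $[R]\cdot H^{n-1}\geq0$ forces $1-r\geq0$; as $r\geq1$ this gives $r=1$ and hence $\delta=r^n=1$, so $f$ is birational. In the boundary case $d=n+2$ one gets $[R]=0$, so $R=0$ and $f$ is \'etale; for $n\geq2$ the hypersurface $X$ is simply connected by the Lefschetz theorem, so an \'etale self-cover must have degree one. The main obstacle I anticipate is precisely the control of $f^*\mathcal{O}_X(1)$ in the low-dimensional range $n\leq2$, where the Picard or N\'eron--Severi rank can exceed one and the clean identity $\delta=r^n$ must be replaced by a direct positivity comparison against $H$; the positivity of $\omega_X$ supplied by adjunction is what ultimately powers the argument, so everything hinges on the sign of $d-(n+2)$ and the effectivity of the ramification divisor.
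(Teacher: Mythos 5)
Your proof covers only part of the lemma's stated range, and the part it misses is the hard part. You correctly set up the trichotomy by the sign of $d-(n+2)$, but you then only execute the cases $d\geq n+3$ (general type) and $d=n+2$ with $n\geq2$ (Calabi--Yau). The Fano range $2\leq d\leq n+1$ is allowed by the hypotheses ($n\geq1$, $d\geq2$) and is never addressed: there your identity $[R]=(d-n-2)(1-r)H$ has $d-n-2<0$, so effectivity of $R$ only gives $r\geq1$, which is no contradiction. Moreover, no argument based solely on effectivity of the ramification divisor can close that case, because the numerics are genuinely realized by Fano varieties: the smooth quadric surface $\mathbb{P}^1\times\mathbb{P}^1\subset\mathbb{P}^3$ admits the surjective, non-birational endomorphism given by squaring on each factor, so the lemma as printed is in fact false at $(n,d)=(2,2)$ (and likewise at $(1,2)$ and $(1,3)$: conics and plane cubics). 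For the Fano hypersurfaces where the statement is true --- $n\geq2$, $3\leq d\leq n+1$, e.g.\ cubic threefolds --- the proof is precisely the main content of Beauville's theorem \cite{MR1809497-Beauville}, which is what the paper's own proof does: it simply cites that theorem and invokes the Lefschetz principle to pass from $\mathbb{C}$ to arbitrary characteristic-zero fields. Beauville's treatment of the Fano case requires input well beyond Riemann--Hurwitz. So what you have proved is the classical, easy half of the cited result; it happens to be the only half the paper ever uses (every application of the lemma has $d\geq n+3$), but it does not prove the lemma as stated.

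Two smaller points. First, for $n=2$ your fallback for the failure of $f^*H\equiv rH$ is too vague: an arbitrary smooth surface in $\mathbb{P}^3$ (unlike a very general one) can have N\'eron--Severi rank $>1$. The clean fix in the general-type range is to eliminate $H$ and work with $K_X$ throughout: from $K_X=f^*K_X+R$ with $K_X$ ample, $f^*K_X$ nef, and $R$ effective, one gets $\delta\,K_X^n=(f^*K_X)^n\leq(f^*K_X)^{n-1}\cdot K_X\leq\cdots\leq K_X^n$, hence $\delta=1$; for $n=1$ use Riemann--Hurwitz on the genus directly. Second, in the Calabi--Yau step, $R=0$ does not immediately allow you to invoke purity of the branch locus: you must first note that $f$ is finite, which follows because any curve contracted by $f$ would lie in the support of $R$; and you should flag that $n=1$, $d=3$ (elliptic curves, where your simple-connectedness input fails) is a genuine exception to the statement, not merely a case your method skips.
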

\begin{proof}For $k=\mathbb{C}$, see \cite[Theorem]{MR1809497-Beauville}. For arbitrary fields in characteristic $0$, we apply Lefschetz's principle.
\end{proof}

\begin{lemma}\label{Pic-free}
Let $k$ be a field of characteristic zero. Let $X$ be a smooth projective variety such that $\mathrm{Pic}(X)\cong\mathbb{Z}$, then every birational morphism $f\colon X\to X$ is an isomorphism.
\end{lemma}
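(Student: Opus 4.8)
The plan is to exploit the rigidity forced by the hypothesis $\mathrm{Pic}(X)\cong\mathbb{Z}$ together with the fact that a birational morphism which fails to be an isomorphism must contract a curve. Fix an ample generator $H$ of $\mathrm{Pic}(X)\cong\mathbb{Z}$ and set $n=\dim X$; the case $n=0$ is trivial, so assume $n\geq1$.

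First I would analyze the pullback on Picard groups. Since $f$ is a proper dominant morphism of the irreducible projective variety $X$ to itself, it is surjective, and the induced map $f^*\colon\mathrm{Pic}(X)\to\mathrm{Pic}(X)$ is an endomorphism of $\mathbb{Z}$, so $f^*H\cong H^{\otimes m}$ for some integer $m$. Because $H$ is ample and $f$ is surjective, $f^*H$ is nef, and it is nontrivial (its degree on any curve not contracted by $f$ is positive); hence $m\geq1$. Alternatively, the standard formula $(f^*H)^n=\deg(f)\,(H^n)$ for generically finite surjective morphisms, combined with $\deg(f)=1$, gives $m^n(H^n)=H^n$ and thus $m=1$. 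The key consequence is that for every curve $C\subset X$ one has $f^*H\cdot C=m\,(H\cdot C)$ with $m\geq1$.

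Next, I would suppose $f$ is not an isomorphism and derive a contradiction. Since $X$ is smooth, hence normal, a finite birational morphism onto $X$ would be an isomorphism by Zariski's main theorem; therefore a non-isomorphic proper birational $f$ cannot be quasi-finite and must admit a fiber of positive dimension. Choosing a curve $C$ contained in such a fiber, the image $f(C)$ is a point, so $f^*H|_C$ is the pullback of a line bundle along the constant map $C\to f(C)$ and is therefore trivial, giving $f^*H\cdot C=0$. Combined with $f^*H\cdot C=m\,(H\cdot C)$ and $m\geq1$, this forces $H\cdot C=0$, contradicting the ampleness of $H$, which requires $H\cdot C>0$ for every curve. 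Hence $f$ must be an isomorphism.

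The crux of the argument is the structural input that a non-isomorphic proper birational self-morphism contracts a curve; I would supply this via Zariski's main theorem (finite plus birational onto a normal target yields an isomorphism) rather than van der Waerden purity of the exceptional locus, since only the existence of a single contracted curve is needed. Everything else reduces to a short intersection-theoretic computation, with the hypothesis $\mathrm{Pic}(X)\cong\mathbb{Z}$ entering precisely to guarantee that $f^*H$ remains a positive multiple of the ample class $H$.
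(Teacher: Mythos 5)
Your proof is correct, but it takes a different route from the paper's. The paper's proof is essentially a one-line citation of \cite[Lemma 2.62]{Kollar-mori}: if $f$ is not an isomorphism, that lemma produces an effective divisor $F$ such that $-F$ has positive intersection with every curve contracted by $f$; since in $\mathrm{Pic}(X)=\mathbb{Z}\cdot H$ (with $H$ ample) any nonzero effective divisor is a positive multiple of $H$, the class $-F$ has negative degree on every curve, a contradiction. That single citation packages both the existence of contracted curves and the negativity statement, but it invokes $\mathbb{Q}$-factoriality (here, smoothness) of $X$ as a hypothesis. You instead split the work: Zariski's main theorem (proper quasi-finite implies finite, and finite birational onto a normal variety is an isomorphism) supplies a positive-dimensional fiber, hence a contracted curve $C$; the hypothesis $\mathrm{Pic}(X)\cong\mathbb{Z}$ enters only through $f^*H=mH$ with $m\geq1$; and the projection formula then gives $0=f^*H\cdot C=m\,(H\cdot C)>0$, a contradiction. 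What your approach buys is generality and self-containedness: it needs only normality (for ZMT) and basic intersection theory, so it works for any normal projective variety in any characteristic, whereas the paper's argument leans on the Koll\'ar--Mori negativity-type lemma for $\mathbb{Q}$-factorial varieties; what the paper's approach buys is brevity. One small remark: your alternative computation $m^n(H^n)=\deg(f)(H^n)$ only yields $m^n=1$, which for even $n$ also admits $m=-1$, so it does not determine $m=1$ by itself; this is harmless since you had already established $m\geq1$ via nefness, and indeed $m\geq1$ is all your contradiction requires.
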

\begin{proof}
If $f$ is not an isomorphism, then by \cite[Lemma 2.62]{Kollar-mori}, there exists an effective divisor $F$ on $X$ such that $-F$ has positive intersection with every curve contracted by $f$, this contradicts with the assumption $\mathrm{Pic}(X)=\mathbb{Z}$.\end{proof}

\begin{lemma}\label{rigidity}
Let $Y$ be an irreducible variety and $g\colon Y\to X$ be a proper and surjective morphism. Assume that every fiber of $g$ is connected and of dimension $n$. Let $f\colon Y\to X$ be a morphism such that $f(g^{-1}(z_0))$ is a point for some $z_0\in Z$. Then $f(g^{-1}(z))$ is a point for every $z\in Z$.
\end{lemma}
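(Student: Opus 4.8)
The plan is to replace the pair $(g,f)$ by the single product morphism $\Phi=(g,f)\colon Y\to X\times X$ and to reduce the statement to a pure fibre-dimension count, letting the equidimensionality of $g$ do the essential work. (I read the ``$Z$'' in the statement as the base $X$ of $g$.) First I would record the geometry of $\Phi$. Since $g$ is proper and the projection $X\times X\to X$ is separated, $\Phi$ is proper, so its image $Z:=\Phi(Y)$ is a closed irreducible subvariety of $X\times X$ and $\Phi\colon Y\to Z$ is proper and surjective. Writing $\alpha\colon Z\to X$ for the restriction of the first projection, one has $g=\alpha\circ\Phi$, and because $\Phi$ surjects onto its image set-theoretically the fibres are
\[
\alpha^{-1}(z)=\{z\}\times f(g^{-1}(z)).
\]
In particular each $\alpha^{-1}(z)$ is the continuous image of the connected set $g^{-1}(z)$, hence connected, and $f(g^{-1}(z))$ is a point if and only if $\alpha^{-1}(z)$ is.

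Next I would run a dimension count. As $g$ is surjective with all fibres of dimension $n$ and $Y$ is irreducible, $\dim Y=\dim X+n$. The map $\alpha$ is surjective (each $g^{-1}(z)$, hence each $f(g^{-1}(z))$, is nonempty), so $\dim Z\geq\dim X$; on the other hand the hypothesis gives $\dim\alpha^{-1}(z_0)=0$, and for a dominant morphism of irreducible varieties every fibre has dimension at least $\dim Z-\dim X$, forcing $\dim Z=\dim X$ and thus $\dim Y=\dim Z+n$. The heart of the matter is then the claim that \emph{every} fibre of $\Phi$ has dimension exactly $n$: since $\Phi\colon Y\to Z$ is dominant between irreducible varieties with $\dim Y-\dim Z=n$, each nonempty fibre has dimension $\geq n$, while $g=\alpha\circ\Phi$ gives $\Phi^{-1}(z,w)\subseteq g^{-1}(z)$, whence dimension $\leq n$.

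To conclude, I would fix $z\in X$ and write $g^{-1}(z)=C_1\cup\cdots\cup C_r$ as its finitely many irreducible components, each of dimension at most $n$. For $w\in f(g^{-1}(z))$ the fibre $\Phi^{-1}(z,w)$ is a closed subset of $g^{-1}(z)$ of dimension $n$, so one of its top-dimensional components is a $C_i$ with $\Phi(C_i)=\{(z,w)\}$. Hence every point of $\alpha^{-1}(z)$ equals $\Phi(C_i)$ for some $i$, so $\alpha^{-1}(z)$ is finite; being connected, it is a single point, and therefore $f(g^{-1}(z))$ is a point.

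The main obstacle, and the reason this is not immediate, is upgrading from generic to special fibres. A naive rigidity argument (contracting $f$ on an affine neighborhood of the image point) shows only that the locus of $z$ with $f(g^{-1}(z))$ a point is open, i.e.\ it controls nearby and generic fibres; the danger is that the fibre dimension of $\Phi$ could jump over special $z$, so that some $f(g^{-1}(z))$ becomes positive-dimensional. The equidimensionality hypothesis on $g$ is exactly what rules this out: it pins $\dim Y=\dim Z+n$ and thereby forces the lower bound $\dim\Phi^{-1}(z,w)\geq n$ at \emph{every} point, which combined with the automatic upper bound collapses every fibre of $\alpha$ to a point. I expect verifying that a dimension-$n$ fibre $\Phi^{-1}(z,w)$ must swallow a whole component $C_i$ to be the one place where a little care with possibly reducible fibres is needed.
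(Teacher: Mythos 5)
Your proof is correct, but it takes a different route from the paper: the paper does not prove this lemma at all, it simply cites it as the Rigidity Lemma, \cite[Lemma 1.6]{MR1658959-Kollar-Mori}. What you have written is essentially a self-contained proof of that cited result (close in spirit to the standard textbook argument, e.g.\ the one in Debarre's book): form $\Phi=(g,f)$, use properness only to know the image is a closed variety, use the single collapsed fiber at $z_0$ plus the fiber-dimension lower bound to pin $\dim\Phi(Y)=\dim X$, and then let equidimensionality of $g$ force every fiber of $\Phi$ to have dimension exactly $n$, so that each fiber of $\alpha$ is finite and, being connected, a point. Each step checks out: the cancellation argument for properness of $\Phi$, the identification $\alpha^{-1}(z)=\{z\}\times f(g^{-1}(z))$, the two dimension counts, and the component-swallowing argument (an $n$-dimensional irreducible component of $\Phi^{-1}(z,w)$ sits inside some component $C_i$ of $g^{-1}(z)$ of dimension at most $n$, hence equals it) are all sound. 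Your version has the added virtue of making explicit where each hypothesis enters --- properness only for closedness of the image, equidimensionality for the fiber-dimension bound at \emph{every} $z$, connectedness only at the last step --- and of resolving the typographical mismatch in the paper's statement (the base of $g$ is written $X$ but referred to as $Z$), which you correctly read as the same variety. Two cosmetic remarks: you recycle the letter $Z$ for the image $\Phi(Y)$, which collides with the $Z$ of the statement, so a different letter would be kinder to the reader; and your closing paragraph's worry about ``generic vs.\ special fibres'' is exactly the right diagnosis of why the naive open-neighborhood argument alone is insufficient, which is precisely what the dimension count repairs.
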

\begin{proof}See \cite[Lemma 1.6]{MR1658959-Kollar-Mori}. This is known as the Rigidity Lemma.
\end{proof}

\begin{lemma}\label{auto}
Let $X$ be a field of characteristic zero. Let $X$ be a general hypersurface of degree $d$ in $\mathbb{P}^{n+1}_k$. If $n\geq2$ and $d\geq4$, then every automorphism of $X$ is trivial.
\end{lemma}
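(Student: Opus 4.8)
The plan is to reduce, by Lefschetz's principle, to the case of an algebraically closed base field of characteristic zero, and then to argue in two stages: first, that every automorphism of a smooth $X$ with $n\geq2$ and $d\geq4$ is the restriction of a projective linear transformation of $\mathbb{P}^{n+1}$; second, that for general $X$ the subgroup of $\mathrm{PGL}_{n+2}$ stabilizing $X$ is trivial.

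For the first stage I would use that $\mathrm{Pic}(X)\cong\mathbb{Z}$, generated by $\mathcal{O}_X(1)$: for $n\geq3$ this holds for every smooth hypersurface by the Grothendieck--Lefschetz theorem, while for $n=2$ it holds for a general surface of degree $d\geq4$ by the Noether--Lefschetz theorem, which is exactly where the hypothesis $d\geq4$ is needed. Any automorphism $\phi$ acts on $\mathrm{Pic}(X)$ preserving ampleness, hence fixes the ample generator, so $\phi^*\mathcal{O}_X(1)\cong\mathcal{O}_X(1)$. Since the restriction map $H^0(\mathbb{P}^{n+1},\mathcal{O}(1))\to H^0(X,\mathcal{O}_X(1))$ is an isomorphism (from the ideal sequence of $X$ together with $H^0(\mathcal{O}(1-d))=H^1(\mathcal{O}(1-d))=0$ on $\mathbb{P}^{n+1}$), the induced action of $\phi$ on $H^0(X,\mathcal{O}_X(1))$ yields a linear transformation $\bar\phi\in\mathrm{PGL}_{n+2}$ restricting to $\phi$. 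Thus $\mathrm{Aut}(X)$ embeds into $\mathrm{Stab}_{\mathrm{PGL}_{n+2}}([X])$.

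For the second stage I would run a dimension count on the incidence variety
$$\Sigma=\{(g,[F])\in\mathrm{PGL}_{n+2}\times P_0 : g\neq1,\ g\cdot[F]=[F]\}$$
and show that its projection to $P_0$ is not dominant. Stratifying by conjugacy class, a representative $g_0$ has fixed locus in $P_0$ equal to the union $\bigcup_\lambda\mathbb{P}(E_\lambda(g_0))$ of projectivized eigenspaces of $g_0$ on degree-$d$ forms, so the part of $\Sigma$ lying over the class $C$ of $g_0$ has dimension $\dim C+\max_\lambda(\dim E_\lambda(g_0)-1)$, where $\dim C=\dim\mathrm{PGL}_{n+2}-\dim Z(g_0)$. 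One then verifies
$$\dim\mathrm{PGL}_{n+2}-\dim Z(g_0)+\max_\lambda\dim E_\lambda(g_0)-1<\dim P_0=\binom{n+1+d}{n+1}-1$$
for every non-scalar $g_0$. Elements far from scalar have tiny eigenspaces but large conjugacy classes and are harmless; the binding case is the pseudo-reflections $g_0=\mathrm{diag}(\zeta,1,\dots,1)$, whose largest eigenspace is a substantial fraction (about half when $\zeta$ has order $2$) of the full space of forms, but whose centralizer is correspondingly large, giving $\dim C=2(n+1)$.

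The main obstacle is precisely this uniform eigenspace estimate across all Jordan types: one must bound $\max_\lambda\dim E_\lambda(g_0)$ against the centralizer dimension and confirm the displayed inequality in the tightest boundary case $(n,d)=(2,4)$, where it already holds, since for the order-$2$ pseudo-reflection one computes $\dim\mathrm{PGL}_4-\dim Z(g_0)+\dim E_\lambda-1=6+21=27<34=\dim P_0$. This is the content of the classical theorem of Matsumura and Monsky on automorphisms of generic hypersurfaces, which may alternatively be invoked directly. Granting the estimate, $\Sigma\to P_0$ is not dominant, so a general $X$ has trivial linear stabilizer and hence, by the first stage, trivial automorphism group.
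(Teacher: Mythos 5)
The paper's entire proof of this lemma is a citation: it invokes Matsumura--Monsky \cite{MR168559-Matsumura-Monsky} (and \cite{chen-2017-automorphism} for positive characteristic, not needed here). Your proposal essentially reconstructs the proof of that cited theorem rather than citing it, so it is a genuinely different---more self-contained---route that ends at the same place, since you explicitly allow falling back on the citation. Your two-stage structure is the right one: (i) linearity of automorphisms via $\mathrm{Pic}(X)\cong\mathbb{Z}\cdot\mathcal{O}_X(1)$ (Grothendieck--Lefschetz for $n\geq3$, Noether--Lefschetz for $n=2$), together with the isomorphism $H^0(\mathbb{P}^{n+1},\mathcal{O}(1))\to H^0(X,\mathcal{O}_X(1))$; (ii) a dimension count on the incidence variety $\Sigma\subset\mathrm{PGL}_{n+2}\times P_0$ showing the linear stabilizer of a general hypersurface is trivial. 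Your check of the extreme case $(n,d)=(2,4)$ is numerically correct: the order-two pseudo-reflection has class of dimension $6$, largest eigenspace of dimension $22$ among the $35$ quartic monomials, and $6+21=27<34$. What your approach buys is transparency about where $d\geq4$ enters; in fact it quietly repairs a weakness of the bare citation, since Matsumura--Monsky's linearity theorem \emph{excludes} the case $(n,d)=(2,4)$ (quartic K3 surfaces can have non-linear automorphisms), so that case genuinely needs the Noether--Lefschetz input you supply. What the citation buys is the complete uniform estimate over all conjugacy types, which you only assert.

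Two points would need attention to make your argument airtight. First, the stratification of $\Sigma$ ``by conjugacy class'' is not immediately a finite stratification: semisimple classes in $\mathrm{PGL}_{n+2}$ move in continuous families, so the eigenvalue moduli must be accounted for. The standard fix is to note that $\mathrm{Aut}$ of a smooth hypersurface of degree $\geq3$ is finite, hence a nontrivial stabilizer contains an element of prime order $p$; in characteristic zero such elements are semisimple with $p$-th-root-of-unity eigenvalue ratios, and their centralizer and eigenspace dimensions depend only on the multiplicity partition, reducing the count to finitely many cases per type. You acknowledge this uniform estimate as ``the main obstacle'' and defer it, which is acceptable only because you also offer the direct citation. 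Second, for $n=2$ the Noether--Lefschetz theorem gives $\mathrm{Pic}=\mathbb{Z}$ only for a \emph{very general} quartic (the complement of countably many divisors), not for a Zariski-general one; your stage-one conclusion for $(2,4)$ should be stated accordingly. This does not affect the way the lemma is used in the paper, where it is applied to the geometric generic fiber $X_L$, which avoids all Noether--Lefschetz divisors, but as a freestanding statement about ``general'' quartic surfaces the word should be ``very general'' (for $n=2$, $d\geq5$ one can instead get linearity for every smooth surface from $K_X=\mathcal{O}_X(d-4)$ and torsion-freeness of $\mathrm{Pic}$, with no generality hypothesis).
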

\begin{proof}See \cite{MR168559-Matsumura-Monsky}. For extension of the result to positive characteristics, see \cite{chen-2017-automorphism}.
\end{proof}
\subsection{}Here is the proof of our main result.
\begin{proof}[Proof of Theorem \ref{theorem-1}] 
The case $m=0$ has been dealt with in Proposition \ref{m=0}. In the following argument, let us assume $m\geq1$. Let $(X_0)^m$ be the $m$-th self product of the $K_0$-variety $X_0$. Let $\Pi\colon (X_0)^{m+1}\to (X_0)^m$ be the projection onto the product of the first $m$ factors. The $K_m$-variety $X_m$ can be identified with the generic fiber of $\Pi$, and $K_m$-rational points of $X_m$ can be identified with graphs of $K_0$-rational maps $f\colon(X_0)^m\dashrightarrow X_0$.

By Lemma \ref{Riedl-Woolf-lemma} and Lemma \ref{Voisin-lemma}, we know that $X_0$ does not contain rational curves geometrically. Therefore by Lemma \ref{Gabber-Liu}, every rational map $f\colon (X_0)^m\dashrightarrow X_0$ extends to a $K_0$-morphism $f\colon (X_0)^m\to X_0$.
We will show that every morphism $f\colon (X_0)^m\to X_0$ is necessarily a projection onto one of the factors, this will finish the proof of Theorem \ref{theorem-1}.

Let $L$ be an algebraic closure of $K_0$, let $X_L$ be the base change of $X_0$ to $L$, and let $f_L\colon (X_L)^m\to X_L$ be the base change of $f$ to $L$. For any fixed $L$-point $p$ on $(X_L)^m$, we consider the restriction of $f_L$ to the closed subscheme $Z=\{p\}\times X_L\subset (X_L)^{m+1}$. By Lemma \ref{Beauville}, we know that $f_L|_Z\colon Z\cong X_L\to X_L$ is either birational or non-dominant. For $n\geq2$, by Noether-Lefschetz theorem for surfaces and Lefschetz hyperplane theorem for Picard groups, we know $\mathrm{Pic}(X_L)\cong\mathbb{Z}\cdot h$, where $h=\mathcal{O}_{\mathbb{P}^{n+1}}(1)|_{X_L}$.
\begin{enumerate}[leftmargin=*]
\item If $f_L|_Z$ is birational for every choice of $p$, we show that $f$ is the projection onto the last factor $$\Pi'\colon (X_L)^{m+1}\to X_L, (p,q)\mapsto q.$$
Note that $f_L|_Z$ is an isomorphism by Lemma \ref{Pic-free}. Lemma \ref{auto} forces $f_L|_Z$ to be of the form $\{p\}\times X_L\to X_L\colon (p,q)\to q$, and therefore for any $p\in (X_L)^m(L)$ and $q\in X_L(L)$, we have $f(p,q)=q$. We conclude that $f_L=\Pi'$ by applying Lemma \ref{rigidity} to $g=\Pi'$.
\item If for some choice $p\in (X_0)^m(L)$, the restriction $f_L|_Z\colon Z\to X_L$ is not dominant, then the image $f_L(Z)\subset X_L$ is an irreducible closed subvariety of dimension $\mathrm{dim}(f_L(Z))<\mathrm{dim}(X_L)$. 

If $0<\mathrm{dim}(f_L(Z))<\mathrm{dim}(X)$, then $n=\mathrm{dim}(X)\geq2$ and $\mathrm{Pic}(X)=\mathbb{Z}\cdot h$. Since $f_L(Z)$ intersects properly with hyperplane sections, we have $f^*h=\alpha\cdot h$ for some $\alpha>0$. Let $C\subset Z$ be a curve that is contracted to a point by $f_L|_Z$. By projection formula we have $0=f_*C\cdot h=C\cdot f^*h=\alpha\cdot(C\cdot h)$, this contradicts with the ampleness of $h$.  

Now $\mathrm{dim}(f_L(Z))=0$, then $Z=\{p\}\times X_L$ is mapped to a point. Apply Lemma \ref{rigidity} to $g=\Pi$, we see that $f$ factors as $f'\circ\Pi$ for some $f'\colon (X_L)^{m-1}\to X_L$. Then we conclude from induction on $m-1$, where the base case $m-1=0$ is proved in Proposition \ref{m=0}.  
\end{enumerate}
\end{proof}

\section{Proof of Theorem \ref{theorem-2}}

\subsection{}The key input into the proof is a dimension estimate of the union of rational curves on hypersurfaces \cite{MR3723169-Riedl-Woolf}. 
Let $d,n$ be integers. Let us follow our notation in Section \ref{setup-n-marked}, and keep track of the degree and dimension of hypersurfaces by subscripts, e.g., for families of degree-$d$ hypersurfaces in $\mathbb{P}^{n+1}$, we denote $P_i$ by $P_{i,(n,d)}$.

\begin{lemma}[Riedl-Woolf]\label{lemma-Erid-Riedl-dim}
Let $R_{(n,d)}\subset P_{1,(n,d)}$ be the union of rational curves contained in fibers of $p_1\colon P_{1,(n,d)}\to P_{0,(n,d)}$, then for $n\geq1, d\geq 2n+3$ or $(n,d)=(1,4)$, we have $$\dim P_{0,(n,d)}-\dim R_{(n,d)}\geq2.$$
\end{lemma}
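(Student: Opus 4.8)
The plan is to stratify the sweep $R_{(n,d)}$ by the degree of the rational curves involved and to bound each stratum by an incidence (Chow-variety) dimension count, importing the crucial non-excess estimate from Riedl-Woolf.

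For each integer $e\geq1$, let $R_e\subset R_{(n,d)}$ be the union of the degree-$e$ rational curves lying in fibers of $p_1$, so that $R_{(n,d)}=\bigcup_{e\geq1}R_e$ and $\dim R_{(n,d)}=\sup_{e}\dim R_e$. First I would form the incidence variety $\mathcal{I}_e$ of pairs $(X,C)$, where $X$ is a degree-$d$ hypersurface and $C\subset X$ is a degree-$e$ rational curve, and compute its dimension by projecting to the space of degree-$e$ rational curves in $\mathbb{P}^{n+1}$ (of dimension $(n+2)(e+1)-4$): the fiber over a curve $C$ is the linear system of degree-$d$ forms vanishing on $C$, of codimension $de+1$ in $P_{0,(n,d)}$. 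This gives the \emph{expected} value $\dim\mathcal{I}_e=\dim P_{0,(n,d)}+(n+2-d)e+(n-3)$. Since $R_e$ is the image in $P_{1,(n,d)}$ of the universal curve over $\mathcal{I}_e$, whose fibers are one-dimensional, I would obtain $\dim R_e\leq\dim P_{0,(n,d)}+(n+2-d)e+(n-2)$. The desired inequality $\dim P_{0,(n,d)}-\dim R_e\geq2$ is then equivalent to $(d-n-2)e\geq n$, which holds for every $e\geq1$ as soon as $d\geq2n+2$; because the bound is decreasing in $e$ for $d>n+2$, the supremum over $e$ is controlled by the line case $e=1$.

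The main obstacle is that this incidence computation only bounds the \emph{expected} dimension: the actual space of rational curves on hypersurfaces can acquire excess components from multiple covers, from curves in special linear position, and from degenerate hypersurfaces (cones, or hypersurfaces containing linear subspaces). Ruling out such excess is exactly the content of Riedl-Woolf (Lemma \ref{Riedl-Woolf-lemma}); their argument validates the expected count precisely in the range $d\geq2n+3$, which is why the hypothesis cannot simply be relaxed to $d\geq2n+2$, the boundary at which the line stratum already attains codimension exactly two.

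Finally, the case $(n,d)=(1,4)$ lies below the threshold $2n+3=5$ and must be treated directly. Here the fibers of $p_1$ are plane quartics, so a rational curve contained in a fiber is a rational component of such a curve, and the count reduces to a Severi-variety computation: irreducible rational plane curves of degree $d$ are exactly the $\binom{d-1}{2}$-nodal ones and form a locus of codimension $\binom{d-1}{2}$ in $P_{0,(1,d)}$, so the associated sweep has codimension $\binom{d-1}{2}-1$. For $d=4$ this equals $2$, and one checks that reducible configurations (line plus nodal cubic, pairs of conics, and so on) contribute sweeps of strictly smaller dimension, yielding $\dim P_{0,(1,4)}-\dim R_{(1,4)}=2$.
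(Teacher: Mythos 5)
Your incidence count is a correct computation of \emph{expected} dimensions, and you correctly identify that the entire difficulty is the gap between expected and actual dimensions. The problem is that the tool you invoke to close that gap cannot do the job. Lemma \ref{Riedl-Woolf-lemma} (Riedl--Woolf, Theorem 1.2) is a statement about the locus $B\subset P_{0,(n,d)}$ of hypersurfaces that contain a rational curve: it gives $\mathrm{codim}_{P_0}(B)\geq d-2n$. It does not assert that the incidence variety $\mathcal{I}_e$, or the sweep $R_{(n,d)}\subset P_{1,(n,d)}$, has its expected dimension, and nothing of that sort follows from it. To convert a bound on $B$ into a bound on the sweep you must control the dimension of the union of rational curves inside each fiber over $B$, and a priori that union can be as large as $n$-dimensional (think of cones, which are swept by lines through the vertex, or of hypersurfaces containing rational surfaces). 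The crude conversion $\dim R_{(n,d)}\leq\dim B+n$ gives $\dim P_{0,(n,d)}-\dim R_{(n,d)}\geq(d-2n)-n=d-3n$, which yields the required $\geq 2$ only for $d\geq 3n+2$; for every $n\geq2$ this misses the claimed range $d\geq 2n+3$ (for $n=1$ it happens to suffice, since there the fibers are curves). This is exactly why the paper's proof does not use Theorem 1.2 at all here: it uses Riedl--Woolf's Theorem 3.9 (rational curves do not sweep a very general \emph{non-Fano} hypersurface, applied to degree-$d$ hypersurfaces in $\mathbb{P}^{2n+2}$, which are non-Fano precisely when $d\geq 2n+3$) together with their Theorem 4.1, a codimension-bumping statement about the sweep itself: each drop of the ambient dimension increases the codimension of the sweep by one. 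Taking $c=n+1$ drops gives $\mathrm{codim}_{P_1}R_{(n,d)}\geq n+2$, hence $\dim P_{0,(n,d)}-\dim R_{(n,d)}\geq 2$. That dimension-reduction mechanism is the missing idea in your proposal; without it (or some substitute argument that actual dimensions agree with expected ones), your proof does not go through.

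Two secondary inaccuracies. First, your fiber bound is an overestimate: degree-$d$ hypersurfaces containing a fixed integral rational curve $C$ of degree $e$ form a linear system of codimension at most $de+1-p_a(C)$, not $de+1$, because a singular rational curve ($p_a(C)>0$) imposes fewer conditions; this correction is exactly what makes the Severi dimension $3e-1$ come out right in the plane, and it inflates the expected dimension of $\mathcal{I}_e$ beyond your stated value on the strata of singular curves. Second, in the $(1,4)$ case your count for irreducible rational quartics (codimension $3$ in $P_0$, hence a sweep with $\dim P_0-\dim R=2$) agrees with the paper, but the claim that reducible configurations contribute strictly smaller sweeps is false: quartics of the form line $\cup$ cubic form an $11$-dimensional locus in $P_0=\mathbb{P}^{14}$, and sweeping out the line already gives a $12$-dimensional sweep, so equality $\dim P_0-\dim R=2$ is attained on reducible strata as well. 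The conclusion survives, but the assertion as written is wrong.
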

\begin{proof}
For $d\geq 2n+3$, a general hypersurface of degree at least $2n+3$ in $\mathbb{P}^{2n+2}$ is not Fano, therefore by \cite[Theorem 3.9]{MR3723169-Riedl-Woolf}, for $d \geq 2n+3$, the union of rational curves $R_{(2n+1,d)}$ is a proper subset of $P_{1,(2n+1,d)}$. 
By \cite[Theorem 4.1]{MR3723169-Riedl-Woolf}, we know that $R_{(2n+1-c,d)}$ has codimension at least $c+1$ in $P_{1,(2n+1-c,d)}$ for $d\geq 2n+3$ and $c\geq 0$. 
Let $c=n+1$, we see that $\dim P_{1,(n,d)}-\dim R_{(n,d)}\geq n+2$ hence $\dim P_{0,(n,d)}-\dim R_{(n,d)}\geq 2$.

For $(n,d)=(1,4)$, the loci of plane quartic curves that contains a rational component has codimension at least $2$, because plane quartic curves with at worst one node contains no rational curves, and the complement of such loci has codimension $2$. 
\end{proof}

\subsection{}\label{rat-map}Let $f\colon X_0\dashrightarrow X_0$ be a $K_0$-rational map. Let $\Gamma_f\subset X_0\times_{K_0}X_0$ be the closure of the graph of $f$, and let $Z\subset P_1\times_{P_0}P_1$ be the closure of $\Gamma_f$ in the self-product of the tautological family. By definition of graph of rational maps, the projection $\mathrm{pr}_1|_Z\colon Z\subset P_1\times_{P_0}P_1\to P_1$ is birational.

\begin{proposition}If $n\geq1,d\geq2n+3$ or $(n,d)=(1,4)$ then 
$\mathrm{pr}_1|_Z\colon Z\subset P_1\times_{P_0}P_1\to P_1$ is an isomorphism.
\end{proposition}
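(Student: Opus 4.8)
The plan is to argue by contradiction, turning a failure of $\mathrm{pr}_1|_Z$ to be an isomorphism into a supply of rational curves inside fibers of $p_1$ large enough to violate Lemma \ref{lemma-Erid-Riedl-dim}. Set $g:=\mathrm{pr}_1|_Z\colon Z\to P_1$; by hypothesis it is proper and birational. Recall that $q_1$ exhibits $P_1$ as a projective bundle over $\mathbb{P}^{n+1}$, so $P_1$ is smooth. Replacing $Z$ by its normalization, I may assume $g$ is a proper birational morphism from a normal variety onto the smooth variety $P_1$. By Zariski's Main Theorem $g$ is an isomorphism over the locus of points with finite fibers, so it suffices to show that $g$ has no positive-dimensional fibers.

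Suppose not, and let $E\subseteq Z$ be the exceptional locus, $S:=g(E)\subseteq P_1$. The first input is van der Waerden's purity theorem: a proper birational morphism onto a smooth variety has exceptional locus of pure codimension one. Hence $E$ is a divisor, $\dim E=\dim P_1-1=\dim P_0+n-1$. The second input controls the contracted curves. If $C\subseteq Z$ is contracted by $g$, then $\mathrm{pr}_1(C)$ is a single point $x_0=([X],x)\in P_1$, so $C\subseteq\mathrm{pr}_1^{-1}(x_0)=\{x_0\}\times X$; that is, $C$ lies in one fiber $X$ of $p_1$, and $\mathrm{pr}_2$ sends $C$ to a curve in $X$. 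Since a positive-dimensional fiber of a proper birational morphism onto a smooth variety is covered by rational curves (a statement valid in arbitrary characteristic), every fiber of $g|_E$ is uniruled, and $\mathrm{pr}_2$ carries it onto a positive-dimensional uniruled subvariety of the corresponding hypersurface $X$. Consequently $\mathrm{pr}_2(E)\subseteq\overline{R_{(n,d)}}$.

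Now I run the dimension count. Fixing the second coordinate $y\in P_1$ determines the hypersurface $[X]=p_1(y)$, and the first coordinate of a point of $E$ lying over $y$ then ranges inside $p_1^{-1}([X])\cong X$; hence the fibers of $\mathrm{pr}_2|_E\colon E\to\overline{R_{(n,d)}}$ have dimension at most $n$. Therefore
$$\dim R_{(n,d)}\ \geq\ \dim \mathrm{pr}_2(E)\ \geq\ \dim E-n\ =\ \dim P_0-1,$$
which contradicts the estimate $\dim P_0-\dim R_{(n,d)}\geq 2$ of Lemma \ref{lemma-Erid-Riedl-dim}. Thus $E=\emptyset$ and $g=\mathrm{pr}_1|_Z$ is an isomorphism.

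I expect the crux to be securing the two geometric inputs in this generality rather than the bookkeeping. Purity is what forces $E$ to be an honest divisor, and this is precisely what makes the codimension-two bound of Lemma \ref{lemma-Erid-Riedl-dim} sharp enough: a codimension-one bound would only yield $\dim R_{(n,d)}\leq\dim P_0-1$ and no contradiction. The uniruledness of positive-dimensional fibers of birational contractions onto smooth varieties, needed in arbitrary characteristic, is what deposits the contracted curves into $R_{(n,d)}$; both inputs lean on the smoothness of $P_1$ and on passing to the normalization of $Z$, and it is there that I would be most careful.
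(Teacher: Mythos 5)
Your proposal is correct and takes essentially the same route as the paper's proof: purity of the exceptional locus over the smooth (hence $\mathbb{Q}$-factorial) base $P_1$, Abhyankar's lemma placing the contracted curves among rational curves in fibers of $p_1$, and the Riedl--Woolf codimension bound of Lemma \ref{lemma-Erid-Riedl-dim}. The differences are only organizational: you argue by contradiction and push the dimension count down to $P_1$ via $\mathrm{pr}_2$ (fibers of dimension at most $n$), whereas the paper bounds the codimension of $E$ directly through the containment $E\subseteq R=R_{(n,d)}\times_{P_0}P_1$ and lets purity force $E=\emptyset$.
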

\begin{proof}
Let $E\subset Z$ be the locus where $\pi_1|_Z$ is not an isomorphism. Note that $P_1$ is $\mathbb{Q}$-factorial, by \cite[Corollary 2.63]{MR1658959-Kollar-Mori}, it suffices to show that $\mathrm{dim}(P_1)-\mathrm{dim}(E)\geq2$. 

Let $R\subset P_1\times_{P_0}P_1$ be the union of rational curves contained in fibers of $\mathrm{pr}_1\colon P_1\times_{P_0}P_1\to P_1$. Since $P_1$ is smooth, by Ahbyankar's lemma \cite[Proposition 1.3]{MR1658959-Kollar-Mori}, we know $E\subseteq R$ and it suffice to show $\dim(P_1)-\dim (R)\geq2$. Note that $R=R_{(n,d)}\times_{P_0,p_1}P_1$
 and that $p_1$ is a submersion, we have $\dim P_{1}-\dim (R)=\dim P_{0,(n,d)}-\dim R_{(n,d)}$. We conclude from Lemma \ref{lemma-Erid-Riedl-dim}.
\end{proof}

\subsection{}Now we are ready for the proof of Theorem \ref{theorem-2}.
\begin{proof}[Proof of Theorem \ref{theorem-2}] 
As discussed in Section \ref{rat-map}, every $K_0$-rational self-map extends to a regular section $s$ to $\mathrm{pr}_1\colon P_1\times_{P_0}P_1\to P_1$. Our goal is to show that $s$ coincides with the diagonal morphism $d_1\colon P_1\to P_1\times_{P_0}P_1$.

For every $k$-point $v\in\mathbb{P}^{n+1}(k)$, let $I_v\subset\mathcal{O}_{\mathbb{P}^{n+1}}$ be the ideal sheaf of $v$, and let
$P_v:=|\mathcal{O}_{\mathbb{P}^{k}}(d)\otimes I_v|$ be the parameter space of degree-$d$ hypersurfaces in $\mathbb{P}^{n+1}$ that passes through $v$. As $v$ runs through $\mathbb{P}^{n+1}(k)$, the subspaces $P_v$ covers $P_0$. It suffices to show that for every $v\in \mathbb{P}^{n+1}(k)$, the section $s$ coincides with $d_1$ after base change to $P_v\subset P_0$.

Following notations in Section \ref{setup-n-marked}, let us consider the canonical morphism to the ambient projective space $q_2\colon P_1\times_{P_0}P_1\to \mathbb{P}^{n+1}$. The fiber $q_2^{-1}(v)$ over $v\in\mathbb{P}^{n+1}$ consists of triples $(x,H,v)$ where $H$ represents a hypersurface that contains $v$ and $x$ is a point in $H$. We identify $q_2^{-1}(v)$ with $P_v\times_{P_0}P_1$ via $((v,H,x): x\in H, v\in H) \mapsto ((H,x): x\in H, H\in P_v)$, then the diagonal section $d_i$ transforms to the constant section $P_v\times\{v\}$, and we are finally reduced to show that the $P_v\times\{v\}$ is the only regular section to $p_2|_{P_v}\colon P_v\times_{P_0}P_1\to P_v$.

Note that a regular section $s'$ to $p_2|_{P_v}$ defines a morphism $q_1\circ s'\colon P_v\to P_1\to\mathbb{P}^{n+1}$. Since $P_v$ is a projective space of dimension strictly larger than $n+1$, we know that $q_1\circ s'$ has to be the constant morphism, and its image lies in  every hypersurface that passes $v$. This forces $s'$ to be the constant section $P_v\times \{v\}$ and finishes the proof.
\end{proof}

\section{Further discussion}
\subsection{}
We note that by Proposition \ref{m=0}, for $n\geq1$, it is not possible to choose a rationally or algebraically varying manner an unordered $m$-tuple of distinct points on every smooth, degree $d$ hypersurface in $\mathbb{P}^{n+1}$ if $d$ does not divide $m$. This partly answers \cite[Question 1.3]{farb2023rigidity}.

\subsection{}
Smooth cubic hypersurface of dimension at least $2$ over a field $K$ with $K$-rational points are $K$-unirational \cite{Kollar-unirationality-cubic}, hence have a lot of $K$-rational points. Thus the optimal expectation for Theorem \ref{theorem-1} is the following:
\begin{conjecture}\label{conj}Let $k$ be a field. Let $m\geq0$ be an integer. Let $X_m/K_m$ be the generic $m$-marked degree-$d$ hypersurface in $\mathbb{P}^{n+1}_k$. If $d\geq4$, then the set of $K_m$-rational points of $X_m$ consists of the $m$ tautological sections: $$X_m(K_m)=\{\delta_1,\cdots,\delta_m\}.$$
\end{conjecture}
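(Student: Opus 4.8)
The plan is to run the inductive structure of the proof of Theorem~\ref{theorem-1}. A $K_m$-rational point of $X_m$ is the graph of a $K_0$-rational map $f\colon (X_0)^m\dashrightarrow X_0$, and since Proposition~\ref{m=0} disposes of $m=0$ over any field and all $d\geq2$, the conjecture is equivalent to the statement that every such $f$ is a coordinate projection. In the range where $X_0$ carries no rational curves, restricting $f$ to a generic slice $\{p\}\times X_0$ and invoking the Rigidity Lemma~\ref{rigidity} reduces all $m$ to $m=1$ exactly as in Theorem~\ref{theorem-1}; this reduction will have to be re-established in the Fano range (see below), but in every case the conjecture comes down to the base case $m=1$: the generic hypersurface $X_0/K_0$ has no $K_0$-rational self-map other than the identity, now for all $d\geq4$ and over an arbitrary field. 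Everything thus rests on extending Theorem~\ref{theorem-2} from $d\geq 2n+3$ to the full range $d\geq4$.

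The second step is to isolate the one degree-sensitive input. The proof of Theorem~\ref{theorem-2} shows that the only place where the degree enters is the claim that the closure $Z\subset P_1\times_{P_0}P_1$ of the graph has finite fibres over $P_1$; since $P_1$ is normal and $\mathrm{pr}_1|_Z$ is birational, this already yields an isomorphism by Zariski's main theorem, and once this regular section exists, the covering of $P_0$ by the loci $P_v$ of hypersurfaces through a point $v$, together with the count $\dim P_v>n+1$, forces the section to be the diagonal with no further hypothesis on $d$ or the characteristic. Hence the $m=1$ conjecture is equivalent to the single assertion that, for all $d\geq4$ and in every characteristic, the graph $Z$ of a $K_0$-rational self-map has no positive-dimensional fibre over $P_1$. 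Where $X_0$ has no rational curves this is precisely the codimension bound of Lemma~\ref{lemma-Erid-Riedl-dim}, whose input Lemma~\ref{Riedl-Woolf-lemma} is already characteristic-free; what remains in that range is to replace the characteristic-zero ingredients of Theorem~\ref{theorem-1}, namely the Noether--Lefschetz computation of $\mathrm{Pic}$ and Lemma~\ref{Beauville}, by positive-characteristic analogues, the automorphism input Lemma~\ref{auto} being already available in all characteristics.

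The genuinely new regime is the Fano range $4\leq d\leq n+1$ (so $n\geq3$), where $X_0$ is covered by rational curves, the locus $R$ fills all of $P_1$, and Lemma~\ref{lemma-Erid-Riedl-dim} gives no control on the fibres of $\mathrm{pr}_1|_Z$. Here I would abandon the morphism-extension mechanism of Lemma~\ref{Gabber-Liu} and argue instead by birational rigidity: near index one ($d=n+1$) smooth hypersurfaces are birationally superrigid by the work of Iskovskikh--Manin, Pukhlikov, de Fernex, Suzuki and Zhuang, so their birational self-maps are automorphisms and hence trivial by Lemma~\ref{auto}. Dominant self-maps of degree $>1$, for which $f^*h=a\,h$ with $a>1$ since $\mathrm{Pic}(X_0)=\mathbb{Z}\cdot h$, would equip $X_0$ with a nontrivial self-correspondence, to be excluded using the scarcity of polarized endomorphisms on rationally connected varieties of Picard number one (Amerik, Hwang--Mok). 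For the low-degree members of this range, where rigidity fails and $X_0$ may even be unirational over $\overline{K_0}$, I expect the decisive constraint to remain arithmetic: a refinement of the degree-divisibility argument of Proposition~\ref{m=0}, applied to the graph cycle $[Z]\in\mathrm{CH}^{\bullet}(P_1\times_{P_0}P_1)$, that survives the presence of rational curves and pins $[Z]$ to the diagonal.

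The hard part is exactly this last point. The available birational-rigidity technology (the method of maximal singularities and multiplier ideals) is both characteristic-zero and confined to $d$ close to $n+1$, and it controls only degree-one maps; a uniform treatment covering every $4\leq d\leq n+1$, excluding higher-degree dominant self-maps, and propagating to the product case $m\geq2$ without any morphism-extension step, is not provided by the results quoted above. A new idea will be required here, most plausibly a Chow-theoretic control of the graph in the spirit of Proposition~\ref{m=0}, and this is the step I expect to be genuinely difficult.
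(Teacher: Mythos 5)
You have attempted to prove Conjecture~\ref{conj}, and the first thing to say is that the paper itself does not prove this statement: it is explicitly offered as the \emph{optimal expectation}, verified only in the ranges of Theorem~\ref{theorem-1} (characteristic zero; $n=1,d\geq4$, $n=2,d\geq5$, $n\geq3,d\geq2n$) and, for $m=1$ in arbitrary characteristic, Theorem~\ref{theorem-2} ($d\geq2n+3$ or $(n,d)=(1,4)$), together with a conditional statement for quartic surfaces; see Table~\ref{table1}. So there is no proof in the paper to compare against, and the only question is whether your proposal settles the open cases. It does not, as you yourself concede. Your first two steps faithfully reconstruct the paper's arguments: identifying $X_m(K_m)$ with rational maps $(X_0)^m\dashrightarrow X_0$, reducing to $m=1$ via Lemmas~\ref{Gabber-Liu}, \ref{Beauville}, \ref{Pic-free}, \ref{rigidity} and \ref{auto} when $X_0$ contains no rational curves, and isolating finiteness of the fibers of $\mathrm{pr}_1|_Z\colon Z\to P_1$ as the only degree-sensitive step in Theorem~\ref{theorem-2} (your Zariski's-main-theorem phrasing is equivalent to the paper's combination of Lemma~\ref{lemma-Erid-Riedl-dim} with $\mathbb{Q}$-factoriality). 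These parts are correct, but they reprove what the paper already proves.

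The gap is the entire remaining range, and it is wider than your ``Fano range $4\leq d\leq n+1$''. In characteristic zero the open cases are $(n,d)=(2,4)$ and $n\geq3$, $4\leq d\leq 2n-1$; throughout this whole range the very general hypersurface \emph{does} contain rational curves --- lines exist whenever $d\leq 2n-1$, since the Fano scheme of lines has (achieved) expected dimension $2n-1-d\geq0$, and quartic surfaces are K3 surfaces --- so the morphism-extension mechanism of Lemma~\ref{Gabber-Liu}, on which both of the paper's proofs depend, and on which your reduction to $m=1$, your exclusion of non-dominant maps via $\mathrm{Pic}=\mathbb{Z}$ and the projection formula, and the rigidity step all depend, is unavailable everywhere in the open range, not only for Fano degrees. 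In positive characteristic even more is open ($m\geq2$ for all $(n,d)$, and $m=1$ for $4\leq d<2n+3$ with $(n,d)\neq(1,4)$), because Voisin's theorem has no characteristic-$p$ analogue. Your proposed substitutes do not fill this: birational superrigidity is a characteristic-zero theory essentially confined to $d$ equal to (or very near) $n+1$ and controls only birational maps; the polarized-endomorphism results you cite concern \emph{morphisms}, which is precisely what one cannot assume once rational curves are present; and the ``Chow-theoretic refinement'' of Proposition~\ref{m=0} that would pin the graph class $[Z]$ to the diagonal is not formulated, let alone proved --- note that Proposition~\ref{m=0} only constrains degrees of zero-cycles modulo $d$, a numerical invariant far too coarse to single out the diagonal among correspondences. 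So your proposal is a reasonable research program whose rigorous portion coincides with the paper's theorems, but Conjecture~\ref{conj} remains open, exactly as the paper leaves it.
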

We summarize our current results in a table, where checkmark $\checkmark$ means Conjecture \ref{conj} is verified in characteristic zero, and checkmark with supscript $\checkmark^+$ means Conjecture \ref{conj} is also verified in characteristic $p$ with the number of markings $m=1$.
\begin{table}[!h]
\begin{center}
\begin{tabular}{c|lllllllllll}
\hline
 $(n,d)$& 4 & 5 & 6 & 7 & 8 & 9 & 10& 11&12&13&$\cdots$\\ \hline
$1$ & $\checkmark^+$ & $\checkmark^+$ & $\checkmark^+$ & $\checkmark^+$ & $\checkmark^+$ & $\checkmark^+$ & $\checkmark^+$ & $\checkmark^+$ & $\checkmark^+$& $\checkmark^+$&$\cdots$\\
$2$  &   & $\checkmark$ & $\checkmark$ & $\checkmark^+$ & $\checkmark^+$ & $\checkmark^+$ & $\checkmark^+$&$\checkmark^+$&$\checkmark^+$&$\checkmark^+$&$\cdots$\\
$3$  &  &  & $\checkmark$ & $\checkmark$ & $\checkmark$ &$\checkmark^+$& $\checkmark^+$&$\checkmark^+$&$\checkmark^+$&$\checkmark^+$&$\cdots$\\
$4$   &  &  &  &  & $\checkmark$ & $\checkmark$ & $\checkmark$ &$\checkmark^+$&$\checkmark^+$&$\checkmark^+$&$\cdots$\\
$5$   &  &  &  &  &  &  & $\checkmark$ & $\checkmark$ 
&$\checkmark$&$\checkmark^+$&$\cdots$\\ 
$\cdots$   &  &  &  &  &  &  & & &$\cdots$ &$\cdots$&$\cdots$\\
\hline
\end{tabular}
\vspace{2mm}
\caption{Verification of Conjecture \ref{conj} for generic hypersurfaces}
\label{table1}
\end{center}
\end{table}

The case $(n,d)=(3,4)$ is interesting, as it provides an example to 
\cite[Question 1.3]{Kollar-unirationality-cubic}. We remark that the case $(n,d)=(2,4)$ can be verified as a corollary of \cite[Conjecture 1]{MR2597304-Dedieu}.

\begin{proposition} Let $k$ a be field of characteristic zero. Let $X/K$ be the generic quartic surface in $\mathbb{P}^3_k$. Let $\overline{K}$ be an algebraic closure of $K$. If every dominant $\overline{K}$-rational self-map of $X_{\overline{K}}$ is birational, then the identity map is the only $K$-rational self-map of $X$.
\end{proposition}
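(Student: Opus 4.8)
The plan is to classify a $K$-rational self-map $f\colon X\dashrightarrow X$ according to the dimension of the closure $C:=\overline{f(X)}$ of its image, which is a $K$-subvariety of $X$ of dimension $0$, $1$, or $2$. Since $X$ is geometrically integral, $K$ is algebraically closed in $K(X)\supseteq K(C)$, so $C$ is geometrically irreducible in all cases. Only the top-dimensional (dominant) case will invoke the hypothesis; the two lower-dimensional cases will be excluded unconditionally, using nothing but Proposition \ref{m=0}, which tells us that $X(K)=\emptyset$ and that every zero-cycle on $X$ has degree divisible by $d=4$.

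First I would dispose of $\dim C=0$. Then $f$ is geometrically constant, so $C$ is a $K$-subvariety with $C_{\overline K}$ a single reduced point; hence $C=\operatorname{Spec}K$ is a $K$-rational point of $X$, contradicting $X(K)=\emptyset$. Next, for $\dim C=1$, let $\nu\colon\widetilde C\to C$ be the normalization, so that $\widetilde C$ is a smooth geometrically irreducible $K$-curve dominated by $X$. After base change to $\overline K$ and resolution of indeterminacy by blowing up points, I obtain a surjective morphism $\widetilde X\to\widetilde C_{\overline K}$ from a smooth blow-up $\widetilde X$ of the $K3$ surface $X_{\overline K}$. Because $h^{1,0}$ is a birational invariant and vanishes for $K3$ surfaces, $H^0(\widetilde X,\Omega^1)=0$; pulling back a nonzero global $1$-form would produce a nonzero such form were $\widetilde C_{\overline K}$ of positive genus, so $\widetilde C_{\overline K}\cong\mathbb{P}^1$ and $\widetilde C$ is a smooth conic over $K$. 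A conic always carries a closed point of degree $1$ or $2$ (for instance a line section in its anticanonical model), whose image under the morphism $\widetilde C\to C\hookrightarrow X$ is a zero-cycle on $X$ of degree $1$ or $2$, contradicting the divisibility by $4$ of Proposition \ref{m=0}. This is the one point where working over the possibly non-closed field $K$ is delicate—an anisotropic conic need not have a $K$-point—but the arithmetic of zero-cycles resolves it cleanly.

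It remains to treat the dominant case $\dim C=2$. Here $f_{\overline K}$ is a dominant rational self-map of the $K3$ surface $X_{\overline K}$, hence birational by hypothesis. As $X_{\overline K}$ is a minimal surface of Kodaira dimension $0$, every birational self-map is biregular, so $f_{\overline K}\in\operatorname{Aut}(X_{\overline K})$. By Lemma \ref{auto} the automorphism group of the very general quartic surface is trivial, whence $f_{\overline K}=\operatorname{id}$, and therefore $f=\operatorname{id}_X$ by faithfully flat descent. I expect the main obstacle to be exactly this case: the non-dominant alternatives fall out of the geometry and the divisibility statement, but ruling out dominant self-maps of degree greater than one is precisely the content the hypothesis supplies. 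This is why the conclusion is conditional, and why it would become unconditional for generic quartic surfaces once Dedieu's conjecture \cite{MR2597304-Dedieu} is established.
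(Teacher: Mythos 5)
Your proposal is correct and follows essentially the same route as the paper: split by $\dim\overline{f(X)}$, rule out dimension $0$ via $X(K)=\emptyset$, rule out dimension $1$ by showing the image curve has geometric genus $0$ (vanishing of $1$-forms on a blow-up of the K3) and then producing a zero-cycle of degree not divisible by $4$, and in the dominant case combine the hypothesis with minimality of K3 surfaces and Lemma \ref{auto}. The only cosmetic difference is in the genus-$0$ step, where you extract a closed point of degree $1$ or $2$ from the conic structure of the normalization, while the paper pushes forward the canonical divisor $K_C$ of degree $-2$; both contradict Proposition \ref{m=0} in the same way.
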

\begin{proof}
Let $f\colon X\dashrightarrow X$ be a $K$-rational map. If $\mathrm{dim}f(X)=0$, then $f(X)$ is a $K$-point on $X$, this would contradict with Proposition \ref{m=0}. If $\mathrm{dim}f(X)=1$, let $C=\overline{f(X)}^\nu$ be the normalization of the closure of $f(X)$. Let $\pi\colon X'\to X$ be a composition of blow-ups so that
$f'=f\circ \pi\colon X'\to C$ is a resolution of indeterminancy for $f\colon X\dashrightarrow C$. Then $H^0(C,\Omega_C^1)$ injects into $H^0(X',\Omega_{X'}^1)=H^0(X,\Omega_X^1)=0$, this forces $C$ to be a smooth curve of geometric genus $0$. The canonical divisor $K_C$ provides a zero cycle of degree $-2$ on $X$, this again contradicts with Proposition \ref{m=0}. If $\mathrm{dim}f(X)=2$, by our assumption, $f$ is birational. By \cite[V.19]{MR1406314-Beauville-book}, we know $f$ is an isomorphism and furthermore the identity morphism by Lemma \ref{auto}. 
\end{proof}

\textbf{Acknowledgements.} The work is partly done when the author was a postdoc at Leibniz University Hannover. The author is grateful to his colleagues for enlightening discussions. The author thanks the referee for pointing out that Proposition \ref{m=0} partly answers \cite[Question 1.3]{farb2023rigidity}.

\bibliographystyle{alpha}
\bibliography{references} 
\end{document}